\newtheorem{theorem}{Theorem}
\newtheorem{theorem2}{Theorem2}
\newtheorem{prop}{Proposition}
\theoremstyle{definition}
\newtheorem{lemma}[theorem]{Lemma}
\newtheorem{remark}[theorem2]{Remark}
\newcommand{\bx}{\boldsymbol{x}}
\newcommand{\bb}{\boldsymbol{\beta}}
\DeclareMathOperator*{\argmax}{arg\,max}
\DeclareMathOperator*{\argmin}{arg\,min}
\newcommand*{\centerfloat}{%
  \parindent \z@
  \leftskip \z@ \@plus 1fil \@minus \marginparwidth
  \rightskip \leftskip
  \parfillskip \z@skip}
\newcommand{\algorithmfootnote}[2][\footnotesize]{
  \let\old@algocf@finish\@algocf@finish
  \def\@algocf@finish{\old@algocf@finish
    \leavevmode\rlap{\begin{minipage}{\linewidth}
    #1#2
    \end{minipage}}%
  }%
}
\journal{}
\begin{document}
\begin{frontmatter}

\title{Robust Binary and Multinomial Logit Models for Classification with Data Uncertainties}


\author[label0,label1]{Baichuan Mo}
\author[label1,label2]{Yunhan Zheng\corref{mycorrespondingauthor}}
\author[label1]{Xiaotong Guo}
\author[label3]{Ruoyun Ma}
\author[label4]{Jinhua Zhao}
\address[label0]{Department of Civil Engineering, Tsinghua University, Beijing, China, 100084}
\address[label1]{Department of Civil and Environmental Engineering, Massachusetts Institute of Technology, Cambridge, MA 02139}
\address[label2]{Singapore–MIT Alliance for Research and Technology Centre (SMART), Singapore}
\address[label3]{Department of Management Science and Engineering, Stanford University, Stanford, CA 94305}
\address[label4]{Department of Urban Studies and Planning, Massachusetts Institute of Technology, Cambridge, MA 20139}

\cortext[mycorrespondingauthor]{Corresponding author}

\begin{abstract}
Binary logit (BNL) and multinomial logit (MNL) models are the two most widely used discrete choice models for travel behavior modeling and prediction. However, in many scenarios, the collected data for those models are subject to measurement errors. Previous studies on measurement errors mostly focus on ``better estimating model parameters'' with training data. In this study, we focus on using BNL and MNL for classification problems, that is, to ``better predict the behavior of new samples'' when measurement errors occur in testing data. To this end, we propose a robust BNL and MNL framework that is able to account for data uncertainties in both features and labels. The models are based on robust optimization theory that minimizes the worst-case loss over a set of uncertainty data scenarios. Specifically, for feature uncertainties, we assume that the $\ell_p$-norm of the measurement errors in features is smaller than a pre-established threshold. We model label uncertainties by limiting the number of mislabeled choices to at most $\Gamma$. Based on these assumptions, we derive a tractable robust counterpart. The derived robust-feature BNL and the robust-label MNL models are exact. However, the formulation for the robust-feature MNL model is an approximation of the exact robust optimization problem. An upper bound of the approximation gap is provided. We prove that the robust estimators are inconsistent but with a higher trace of the Fisher information matrix. They are preferred when out-of-sample data has errors due to the shrunk scale of the estimated parameters. The proposed models are validated in a binary choice data set and a multinomial choice data set, respectively. Results show that the robust models (both features and labels) can outperform the conventional BNL and MNL models in prediction accuracy and log-likelihood. We show that the robustness works like ``regularization'' and thus has better generalizability.

\end{abstract}

\begin{keyword}
Discrete choice models for classification; Robust optimization; Data uncertainty
\end{keyword}

\end{frontmatter}

\section{Introduction}
Binary logit (BNL) and multinomial logit (MNL) models are the two most widely used discrete choice models (DCMs) \citep{ben1985discrete}. They are widely used to describe, explain, and predict choices between two or more discrete alternatives, such as entering or not entering the labor market, or choosing between modes of transport. The models specify the probability that a person chooses a particular alternative, with the probability expressed as a function of observed variables that relate to the alternatives and the person.

\subsection{Preliminaries}
BNL and MNL models can be derived from random utility theory. Particularly, let $\bx_{n,i}$ be the vector of observed factors for person $n$ with respective alternative $i$, defined as \begin{align}
    \bx_{n,i} = (x_{n,i}^{(k)})_{k\in\mathcal{K}_i}, \quad \forall i \in \mathcal{C}_n,
\end{align} 
where $x_{n,i}^{(k)}$ is the $k$-th element of $\bx_{n,i}$. $\mathcal{C}_n$ is the set of available alternatives for person $n$ (for example, public transit, walking, car). The set of all alternatives is $\mathcal{C} := \cup_{n\in\mathcal{N}} \mathcal{C}_n$. $\mathcal{K}_i$ is the set of features for alternative $i$. The set of all features is $\mathcal{K} := \cup_{i\in\mathcal{C}} \mathcal{K}_i$. For example, features may include travel time, travel cost, and a person's income.  $\mathcal{C}$ and $\mathcal{K}$ are both finite.

Let $U_{n,i}$ be the utility that person $n$ obtains from choosing alternative $i$. The person's utility depends on many factors, some of which we observe and some not. Hence, $U_{n,i}$ can be decomposed into a part that depends on observed variables and another part with unobserved factors. In a linear form, the utility is expressed as
\begin{align}\label{eq_utility}
    U_{n,i} = \boldsymbol{\beta}_i^T  \boldsymbol{x}_{n,i} + \epsilon_{n,i},
\end{align}
where $\boldsymbol{\beta}_i \in\mathbb{R}^{|\mathcal{K}_i|}$ is the corresponding vector of coefficients to be estimated. It represents the contribution of a feature (e.g., cost, travel time) to the total utility. $\epsilon_{n,i}$ is a random variable that captures the impact of all unobserved factors that affect the person's choice and zero-mean measurement errors. $U_{n,i}$ is also a random variable. 

The choice of the person is designated by dummy variables $y_{n,i}$. $\boldsymbol{y_n} = (y_{n,i})_{i \in \mathcal{C}_n}$ is the associated vector. $y_{n,i} = 1$ indicating person $n$ choosing alternative $i$ and $y_{n,i} = 0$ otherwise. Based on the utility maximization assumption (i.e., people will choose the highest utility alternative), the probability of $y_{n,i} = 1$ can be modeled as 
\begin{align}\label{eq_mnl_p}
    P_{n,i} := \mathbb{P}(y_{n,i} = 1) &= \mathbb{P}(U_{n,i} \geq U_{n,j}, \; \forall j\in \mathcal{C}_{n}\setminus\{i\}) \notag \\
    & = \mathbb{P}\big(\epsilon_{n,i}-\epsilon_{n,j} \geq (\boldsymbol{\beta}_j^T  \boldsymbol{x}_{n,j} - \boldsymbol{\beta}_i^T  \boldsymbol{x}_{n,i}), \; \forall j\in \mathcal{C}_{n}\setminus\{i\}\big) .
\end{align}
In BNL and MNL models, $\epsilon_{n,i}$ is assumed to be independent and identically Gumbel distributed. Then $\epsilon_{n,i} - \epsilon_{n,j}$ will follow the logistic distribution with cumulative density function (CDF) $F_{\Delta \epsilon_{i,j}}(z) = \frac{1}{1+e^{-z}}$. When $|\mathcal{C}_n| =2$ for all people, this formulation represents the BNL model. Eq. \ref{eq_mnl_p} has a closed-form formulation \citep{train2009discrete}:
\begin{align}
    P_{n,i}  = F_{\Delta \epsilon_{i,j}}(\boldsymbol{\beta}_i^T  \boldsymbol{x}_{n,i} - \boldsymbol{\beta}_j^T  \boldsymbol{x}_{n,j}) = \frac{1}{1+e^{\boldsymbol{\beta}_j^T \boldsymbol{x}_{n,j} - \boldsymbol{\beta}_i^T \boldsymbol{x}_{n,i}}} = \frac{e^{\boldsymbol{\beta}_i^T \boldsymbol{x}_{n,i}}}{ e^{\boldsymbol{\beta}_i^T \boldsymbol{x}_{n,i}}+e^{\boldsymbol{\beta}_j^T \boldsymbol{x}_{n,j}}}, \quad \forall i,j\in\mathcal{C}_n,
\end{align}
which is equivalent to the canonical logistic regression if $\boldsymbol{x}_{n,j}=\boldsymbol{x}_{n,i}$. 

For $|\mathcal{C}_n| > 3$, the formulation represents the MNL model and Eq. \ref{eq_mnl_p} is equivalent to $\mathbb{P}(U_{n,i} - \max_{j\in\mathcal{C}_{n}\setminus\{i\}}U_{n,j} \geq 0)$. Given the property of the Gumbel distribution, $U_n^* := \max_{j\in\mathcal{C}_{n}\setminus\{i\}}U_{n,j}$ is also Gumbel distributed with the location parameter equal to $\sum_{j\in\mathcal{C}_{n}\setminus\{i\}} \boldsymbol{\beta}_j^T \boldsymbol{x}_{n,j}$. Then the CDF of $(U_{n,i} - \max_{j\in\mathcal{C}_{n}\setminus\{i\}}U_{n,j})$ will be logistic distributed \citep{ben1985discrete} with CDF $F_{\Delta U_{i,*}}(z) = \frac{1}{1+\exp(-z + \bb_i^T \bx_{n,i} - {\sum_{ j\in\mathcal{C}_{n}\setminus\{i\}}}\bb_j^T \bx_{n,j})}$. Therefore, the probability for the MNL model is 
\begin{align}
    P_{n,i} = \mathbb{P}(U_{n,i} - \max_{j\in\mathcal{C}_{n}\setminus\{i\}}U_{n,j} \geq 0) =1- F_{\Delta U_{i,*}}(0) =\frac{e^{\boldsymbol{\beta}_i^T \boldsymbol{x}_{n,i}}}{\sum_{j\in\mathcal{C}_n} e^{\boldsymbol{\beta}_j^T \boldsymbol{x}_{n,j}}},
\end{align}

Maximum likelihood estimation (MLE) is usually used to estimate the BNL and MNL models to get the parameter. Define $\boldsymbol{\beta} := (\bb_j)_{j\in\mathcal{C}}$. The MLE can be expressed as the following optimization problem: 
\begin{align}
    \max_{\boldsymbol{\beta}} & \sum_{n\in\mathcal{N}} \sum_{i\in\mathcal{C}_n} y_{n,i} \cdot \log(P_{n,i}(\boldsymbol{\beta})) = \max_{\boldsymbol{\beta}} \sum_{n\in\mathcal{N}} \sum_{i\in\mathcal{C}_n} y_{n,i} \cdot \log\left(\frac{\exp({\boldsymbol{\beta}_i^T \boldsymbol{x}_{n,i}})}{\sum_{j\in\mathcal{C}_n} \exp({\boldsymbol{\beta}_j^T \boldsymbol{x}_{n,j}})}\right),
    \label{eq_mle}
\end{align}
where $\mathcal{N}$ is the set of all persons. 

\subsection{Uncertainties in data}
BNL and MNL models are usually used as classifiers to predict an individual's behavior based on the estimated parameters from real-world data (such as surveys). However, in many scenarios, the collected data are subject to uncertainties (such as erroneous responses, dictation errors, etc.), which are known as measurement errors \citet{hausman2001mismeasured}. Beyond measurement errors, there are other sources of data uncertainty in transportation. For instance, \citet{li2023modified} identify three key factors: (a) perceived data used to construct models can be perturbed due to errors in measurement or recording; (b) while nominal distributions are used for random parameters in transportation networks, actual distributions may deviate, such as when roads unexpectedly close or maintenance work extends longer than planned; and (c) future validation data may differ from past data on which the model was built. Additionally, \citet{mo2022quantifying} highlighted other sources of data uncertainty in traffic state estimation, including inherent stochasticity in driving behavior and random initial or boundary conditions. This research specifically focuses on measurement error issues within the context of DCMs.

Data errors may lead to biased or inconsistent estimates of model parameters, deteriorating the model's predictive power. Note that although the error term $\epsilon_{n,i}$ in Eq. \ref{eq_utility} is assumed to capture some zero-mean measurement errors, an important assumption for the error term is that $\epsilon_{n,i}$ is uncorrelated with observed factors $\bx_n$. However, in reality, measurement errors are often caused by misreporting some specific factors (like travel time, income, etc.), which are inevitably related to $\bx_n$. Therefore, relying solely on $\epsilon_{n,i}$ to capture measurement errors will break the independence assumption and cause endogeneity. 

Errors can happen in features (i.e., $\bx_n$, left-hand side variables) and labels (i.e., $\boldsymbol{y}_n$, right-hand side variables), which require different ways to address. The typical way to deal with Measurement errors in the literature is instrumental variables \citep{hausman2001mismeasured}. The instrumental variables are assumed to be correlated with the ``true value'' of the mismeasured variables but uncorrelated with error terms. 

Previous literature on measurement errors usually focuses on ``better estimating model parameters'' with training data. However, in the real world when using the trained (or estimated) DCMs to \textbf{predict} new users (or samples) behavior (i.e., a classification problem), measurement errors also exist in the testing data set. This implies that an ``unbiased estimation'' after correcting bias may end up performing worse in the travel behavior prediction task. Our paper will focus on this task, which differentiates us from previous econometrics studies, as shown in Table \ref{tab_compare}. Specifically, our study assumes uncertainties in the testing data set and aims to improve prediction accuracy under uncertainties. While previous studies assume uncertainties in the training data set and focus on estimating unbiased model parameters with the training data. This makes our paper closely related to developing robust classifiers \citep{bertsimas2019robust}. 

\begin{table}[H]
\centering
\caption{Comparison of this study and literature}\label{tab_compare}
\begin{tabular}{cccc}
\hline
 & \textbf{Task} & \textbf{Performance} & \textbf{Data uncertainties}  \\
\hline

\textbf{Literature}           & Estimate unbiased parameters            & Interpretability            & Training data                       \\
\textbf{This study}         & Predict new samples            & Prediction accuracy            & Training and testing data                     \\
\hline
\end{tabular}
\end{table}

\subsection{Organization and contributions}
In this paper, we propose a robust discrete choice model framework that is able to account for data uncertainties in both features and labels. The objective is to provide a more accurate prediction of an individual's behavior for new samples (i.e., testing data set) as a classification task when there existing data errors. The model is based on a robust optimization framework that minimizes the worst-case loss over a set of uncertainty data scenarios. Specifically, for feature uncertainties, we assume that the $\ell_p$-norm of the measurement errors in features is smaller than a pre-established threshold. We model label uncertainties by limiting the number of mislabeled choices to at most $\Gamma$. Based on these assumptions, we derive a tractable robust counterpart for robust-feature and robust-label DCM models. The derived robust-feature BNL and the robust-label MNL models are exact. However, the formulation for the robust-feature MNL model is an approximation of the exact robust optimization problem. The proposed models are validated in a binary choice data set and a multinomial choice data set, respectively. Results show that the robust models (both features and labels) can outperform the conventional BNL and MNL models in prediction accuracy and log-likelihood. We show that the robustness works like ``regularization'' and thus has better generalizability. 

The main contribution of the paper is as follows:
\begin{itemize}
    \item We drive the closed-form robust counterparts for the robust BNL and MNL models. Specifically, the formulations for robust-feature BNL robust-label MNL models are exact (i.e., not an approximation). For robust MNL, due to the difficulties in maximizing a convex function, we use Jensen's inequality to approximate the original objective function, yielding a lower bound of the original maximization problem. We prove that the gap of the approximation will not be extreme. 

    \item We prove that the proposed robust-feature and robust-label estimators are inconsistent, but they have a higher trace of the Fisher information matrix compared to the MLE estimator (usually imply lower variance). When predicting new samples with data errors, robust estimators are preferred due to the shrinking scale of the estimated parameters. 
    \item We explain the good performance of robust DCM models from the aspects of both theories and experiments. We show that robust models tend to shrink the scale of the estimated parameters, which lowers the expected prediction errors when testing data has perturbations. Experiment results also validate the theories. 
\end{itemize}

Our work is connected to the robust classification methods proposed by \citet{bertsimas2019robust}, where robust optimization and logistic regression are integrated to construct classifiers that are capable of handling uncertainties in both features and labels. We draw inspiration from these approaches in terms of constructing suitable uncertainty sets and deriving robust counterparts in robust classifications. However, our work goes beyond their exclusive focus on binary classification. Instead, we contribute by developing robust DCMs for more than two categories and providing a theoretical analysis of the statistical properties of the robust estimators. For more than categories, we show that no exact robust counterpart exists, and an outer approximation is needed. We also analyze the quality of the approximation and prove that the gap of the approximation will not be extreme. 

The remainder of this paper is organized as follows. The literature review is presented in Section \ref{sec_liter}. In Section \ref{sec_feature_uncertainty}, we describe formulations and derivations of the robust-feature DCMs for binary and multinomial cases. In Section \ref{sec_label}, we elaborate on the robust-label DCMs that are suitable for both binary and multinomial cases. Section \ref{sec_stats} discusses the theoretical statistical properties and the out-of-sample prediction performance for the robust estimators. We apply the proposed framework to two different data sets as case studies in Section \ref{sec_case_study}. Conclusions and discussions are presented in Section \ref{sec_conclusion}.

\section{Literature review}\label{sec_liter}
\subsection{Measurement errors in DCM}
Transportation planning and policy analysis heavily rely on travel survey data, which includes information about activity patterns, travel behaviors, and comprehensive socio-demographic profiles of the surveyed populations. However, it's crucial to acknowledge that the presence of measurement errors in survey data is not uncommon. These errors can affect various travel-related variables, including mode choice, trip duration, and travel costs, as well as socio-demographic factors such as income \citep{paleti2019misclassification}. These errors present a significant challenge when utilizing DCM for the analysis of travel surveys. For instance, a Monte Carlo simulation conducted by \citet{hausman1998misclassification} revealed that even a small rate of outcome misclassifications (e.g., 2\%) can result in DCM estimates that exhibit biases ranging from 15\% to 25\% when the outcome is binary.

The origins of measurement errors in household surveys are multifaceted and can be attributed to several distinct sources. First, respondents may consciously choose to provide misleading information due to various motivations, including the desire to conceal certain details or to offer socially acceptable responses \citep{kreuter2008social}. For instance, research has revealed that self-employed individuals, in particular, may deliberately underreport their income by as much as 25\% when participating in household surveys \citep{hurst2014household}. Second, measurement errors can stem from inadvertent misreporting by respondents. This occurs when individuals encounter difficulties in comprehending survey questions, struggle to recollect specific details from memory, or employ inappropriate decision-making heuristics \citep{campanelli1991use}. Furthermore, the precision of survey data can be intricately connected to the particular survey methods and tools utilized for data collection. For instance, individuals engaged in surveys conducted through Computer Assisted Telephone Interviewing (CATI) methodologies have exhibited systematic tendencies to underreport travel, underestimate travel distances, and overstate travel times, in comparison to surveys that leverage GPS-based tracking technology \citep{stopher2007assessing}.

DCM has been widely used for both understanding people's travel decisions and conducting activity-travel planning \citep{ben1999discrete,bowman2001activity}. Consequently, the presence of measurement errors within the data can potentially distort policy decisions that rely on estimation outcomes derived from compromised data. For instance, when essential travel decision factors like travel cost and time are inaccurately measured, it can yield erroneous estimates of their influence on travelers' preferences. Likewise, if mode choice is subject to mismeasurement, it may result in investments in transportation infrastructure that do not align with the genuine preferences of travelers, potentially leading to suboptimal resource allocation and inefficient utilization of public funds. Therefore, it is imperative to develop methods that account for data mismeasurements within traditional DCM.

Data perturbation and uncertainty are not limited to measurement errors in DCM but extend to other fields of transportation modeling. For instance,  in the field of traffic modeling, \citet{li2023modified} identify several potential sources of data uncertainty, such as measurement and recording errors, deviations between nominal and actual distributions of random parameters, and discrepancies between future validation data and the historical data used for model development. They demonstrate the robustness of their Modified Late Arrival Penalized User Equilibrium (MLAPUE) model in addressing these data perturbations. \citet{shao2021optimization} emphasize the critical impact of sensor measurement error, noting that magnetic field interference can compromise the accuracy of inductive loop detectors, while factors such as visibility, lighting, and weather conditions can significantly affect video detector efficiency—ultimately influencing traffic flow estimation.  In addition to measurement error, traffic stochasticity is another source of uncertainty. \citet{mo2022quantifying} underscore the inherent randomness in driving behaviors and propose a Physics-Informed Generative Adversarial Network (GAN) to effectively quantify uncertainty in traffic state estimation.  \citet{musunuru2019applications} highlight the serious implications of data measurement errors in road safety modeling. These errors often stem from transportation agencies extrapolating short-term traffic counts over time and space, leading to biases in regression coefficient estimates and increased model dispersion. To address this, they propose robust measurement error correction techniques, including regression calibration and simulation extrapolations.

The widespread occurrence of data uncertainty in transportation modeling underscores its critical impact on the reliability and robustness of analytical results. While numerous studies have explored the implications of measurement errors and uncertainty in traffic modeling, the challenges posed by data uncertainty in DCM have received relatively limited attention. This study addresses this gap by focusing specifically on the effects of measurement errors in DCM, thereby contributing to the advancement of robust modeling practices within the broader transportation research field.


\subsection{Methods for addressing measurement errors}

To address the issue of feature mismeasurement (or ``uncertainty''), researchers have primarily employed two main coping strategies \citep{schennach2016recent}. The first approach centers on the recovery of accurate, error-free values from data tainted by measurement errors. However, this method assumes prior knowledge of the measurement error distribution, which may not always align with real-world situations. For instance, some earlier studies employed Fourier transform algorithms to mitigate measurement errors while assuming the availability of known error distributions \citep{wang2011deconvolution,schennach2019convolution}.

The second strategy involves correcting measurement error biases by incorporating readily available auxiliary variables, which can include repeated measurements \citep{schennach2004estimation}, indicators \citep{ben2014identification}, or instrumental variables \citep{hu2008identification}. The instrumental variable approach, in particular, has been widely adopted to mitigate mismeasurement problems in linear specifications \citep{hausman2001mismeasured}. In this approach, instrumental variables are carefully selected to serve as proxies for the imperfectly measured feature variables. They are chosen based on their lack of correlation with the measurement errors, allowing them to effectively separate the measurement errors from the estimation process for the dependent variable \citep{baiocchi2014instrumental}. However, a significant challenge with this group of approaches is the difficulty of obtaining suitable auxiliary variables in many practical applications.

In the context of addressing label uncertainties within DCM, previous research has employed modified maximum likelihood estimators \citep{hausman1998misclassification,paleti2019misclassification,hausman2001mismeasured}. This approach involves the direct codification and estimation of the proportion of misclassified data using maximum likelihood estimation. Nevertheless, this method assumes that the proportion of misclassified data is fixed and can be applied to the out-of-sample data. In many scenarios, the extent of misclassification is not deterministic but falls within specific ranges.

To overcome the aforementioned limitations, we propose the application of robust optimization to handle feature and label uncertainties in DCM. Robust optimization offers a novel approach by accounting for data uncertainties within a predefined range. Unlike conventional econometric methods, robust optimization does not require prior knowledge of the error distribution or the collection of auxiliary data. Specifically, it assumes that uncertain parameters, such as measurement errors and the number of mislabeled choices, belong to an uncertainty set of possible outcomes. The optimization process is based on identifying and addressing the worst-case scenario within this uncertainty set \citep{gorissen2015practical,bertsimas2010optimality}. 

Robust optimization techniques have provided researchers with valuable tools to tackle problems involving data uncertainty across a wide array of domains, including transportation routing and scheduling \citep{shi2019robust,sungur2008robust,guo2024robust}, path recommendation \citep{mo2023robust}, healthcare resource allocation \citep{wang2019distributionally}, and portfolio optimization \citep{fernandes2016adaptive}.  Notably, to the best of our knowledge, no existing studies have employed robust optimization to tackle the challenges of measurement errors in travel behavior modeling with DCMs.

The robust optimization is solved under the worst-case scenario to provide a conservative decision. It has been shown in many studies that though the actual testing case is not the worst case, the decisions made by robust optimization still work well in general compared to the nominal model. In this paper, we derive the estimated parameters assuming some ``worst-case'' pattern of data errors that deteriorates the likelihood function. In the testing data, though the actual error patterns are not ``worst-case'', the estimated parameters may still perform better than the nominal DCM models. The intuition behind this can be understood using the analogy of robustness and regularization. Robustness can be seen as a way to avoid the model from overfitting. 

\subsection{Robust optimization for classification problems}
Robust optimization has emerged as a key methodology for tackling classification tasks in the presence of uncertainty. The theoretical groundwork for robust optimization was established by Ben-Tal and Nemirovski \citep{ben1998robust,ben1999robust}, who developed practical reformulations for optimization problems involving uncertainty. These foundational concepts were later applied to classification, particularly through the extension of support vector machines (SVMs) to more robust formulations. For example, \citet{xu2009robustness} introduced a robust SVM approach that accounts for input uncertainty using ellipsoidal uncertainty sets, showing that such models naturally incorporate regularization and can enhance generalization performance.

Building on these initial ideas, later studies explored robustness in both linear and nonlinear classification frameworks. \citet{shivaswamy2006second} introduced robust kernel-based classifiers, while \citet{lanckriet2002robust} proposed semidefinite programming formulations that enabled robustness in more complex, nonlinear decision boundaries. These early advancements laid the groundwork for viewing robust classification as a problem of optimizing performance under worst-case perturbations in the input space.

More recent developments have focused on distributionally robust optimization, which seeks to minimize classification loss under the most adverse distribution drawn from a specified ambiguity set. Key contributions in this area include \citet{duchi2021statistics}, who utilized the Wasserstein distance to define neighborhoods around empirical distributions, and \citet{duchi2019variance}, who introduced ambiguity sets based on chi-squared divergence to improve both generalization and fairness. Building on these foundations, \citet{sinha2017certifying} proposed scalable methods for training deep learning models within the DRO framework, emphasizing the critical role that the choice of divergence metric plays in shaping classifier performance.

Another influential strand of literature connects robust optimization with adversarial machine learning. \citet{goodfellow2014explaining} introduced adversarial examples, and revealed deep classifiers’ vulnerabilities to small perturbations. Building on this, \citet{madry2017towards} formulated adversarial training as a robust optimization problem, where the classifier is optimized to withstand worst-case perturbations within a norm-bounded set. TRADES \citep{zhang2019theoretically} refined this approach by introducing a trade-off between natural accuracy and robustness, grounded in a distributionally robust perspective.

Existing work on robust optimization for classification has predominantly focused on improving parameter estimation from training data with noisy features, often limited to binary classification settings. In contrast, we develop a unified framework that explicitly accounts for uncertainties in both features and labels, and extends beyond binary cases to include multinomial logit models. We derive closed-form robust counterparts for the BNL and MNL models—exact in some cases and accompanied by provable approximation bounds in others. Additionally, we analyze the statistical properties of the proposed estimators, showing that they yield higher Fisher information and exhibit improved out-of-sample generalization.

\section{Robustness against uncertainties in features}\label{sec_feature_uncertainty}

In this section, we consider perturbations (i.e., measurement errors) $\boldsymbol{\Delta x}_{n}$ for person $n$ in his/her features.
Without loss of generality, let us assume all alternatives use full features (i.e., $\bx_{n,i} = \bx_{n,j} = \bx_{n}$ for all $i,j \in \mathcal{C}_n$). For any model specification, we can set corresponding parameters to zero so as to filter out undesired features in an alternative. This is equivalent to defining a parameter domain $\boldsymbol{\beta}_i \in \mathcal{B}_i$, where:
\begin{align}
    \mathcal{B}_i = \{\boldsymbol{\beta}_i \in \mathbb{R}^{|\mathcal{K}|}:\; \beta_i^{(k)} = 0 \text{ if the $k$-th feature is not used for mode $i$} \}, \quad\forall i \in \mathcal{C}.
\end{align}
The overall parameter domain is thus as $\mathcal{B} = \cup_{i\in\mathcal{C}} \mathcal{B}_i$.
With uncertainty in features, we have 
\begin{align}
    U_{n,i} = \boldsymbol{\beta}_i^T (\boldsymbol{x}_{n}+\boldsymbol{\Delta x}_{n}) + \epsilon_{n,i},
\end{align}
where $\boldsymbol{\Delta x}_n \in \mathcal{Z}_n(\rho_n)$ and $\mathcal{Z}_n(\rho_n)= \{\boldsymbol{\Delta x}_n: \norm{\boldsymbol{\Delta x}_{n}}_{p} \leq \rho_{n}\}$ is the uncertainty set (we consider an $\ell_p$-norm uncertainty). $\rho_{n}$ is a hyper-parameter that represents the largest error of the perturbations $\boldsymbol{\Delta x}_n$.

Assume the uncertainties are independent across individuals:
\begin{align}
    \mathcal{Z}(\boldsymbol{\rho}) = \prod_{n\in\mathcal{N}} \mathcal{Z}_n(\rho_n) =  \prod_{n\in\mathcal{N}}\{\boldsymbol{\Delta x}_n: \norm{\boldsymbol{\Delta x}_{n}}_{p} \leq \rho_{n}\},
\end{align}
where $\boldsymbol{\rho}:=(\rho_n)_{n\in\mathcal{N}}$.

Then, the robust-feature MNL can be formulated as:
\begin{align}
    \max_{\boldsymbol{\beta}\in\mathcal{B}} \min_{ \boldsymbol{\Delta x}\in\mathcal{Z}(\boldsymbol{\rho})}  \sum_{n\in\mathcal{N}} \sum_{i\in\mathcal{C}_n} y_{n,i} \cdot \log\left(\frac{\exp({\boldsymbol{\beta}_i^T (\boldsymbol{x}_{n} + \boldsymbol{\Delta x}_{n})})}{\sum_{j\in\mathcal{C}_n} \exp({\boldsymbol{\beta}_j^T (\boldsymbol{x}_{n}+\boldsymbol{\Delta x}_{n}})}\right).
    \label{eq_robust_mnl}
\end{align}

The selection of $\boldsymbol{\rho}$ depends on the data patterns. In practice, we can follow the typical hyper-parameter tuning ideas in machine learning by reserving some of the training data as a validation data set or using cross-validation to select the best uncertainty sets.

\subsection{Binary logit model}\label{sec_binary}
The derivation of robust BNL is adapted from \citet{bertsimas2019robust}'s work on robust binary logistic regression (i.e., not the original work of the paper). This is because the BNL model is equivalent to logistic regression in the binary classification case. Consider a binary logit model (BNL) with at most two alternatives for each individual (i.e., $\mathcal{C} = \{1,2\}$). Define $I_n\in \mathcal{C}$ as the choice for individual $n$, and $J_n\in \mathcal{C}$ as the counterpart (i.e., non-choice), where $y_{n,I_n} = 1, y_{n,J_n} = 0,\; \forall n \in\mathcal{N}$.
Then the robust-feature BNL model can be reformulated as:
\begin{align}
  \max_{\boldsymbol{\beta}\in\mathcal{B}} \min_{\boldsymbol{\Delta x} \in \mathcal{Z}}\sum_{n\in\mathcal{N}} \log\left(\frac{1}{ 1 +\exp\big(-({\boldsymbol{\beta}_{I_n} -\boldsymbol{\beta}_{J_n})^T (\boldsymbol{x}_{n}} + \boldsymbol{\Delta x}_{n})\big)}\right).
\end{align}
The inner minimization problem is:
\begin{align}
 & \min_{\boldsymbol{\Delta x} \in \mathcal{Z}}\sum_{n\in\mathcal{N}} -\log\left({ 1 +\exp\big(-({\boldsymbol{\beta}_{I_n} -\boldsymbol{\beta}_{J_n})^T (\boldsymbol{x}_{n}} + \boldsymbol{\Delta x}_{n})\big)}\right) \\
   \Longleftrightarrow &  \sum_{n\in\mathcal{N}} \min_{\boldsymbol{\Delta x}_n \in \mathcal{Z}_n} -\log\left({ 1 +\exp\big(-({\boldsymbol{\beta}_{I_n} -\boldsymbol{\beta}_{J_n})^T (\boldsymbol{x}_{n}} + \boldsymbol{\Delta x}_{n})\big)}\right) .
  \label{eq_inner_binary}
\end{align}
Let $s_n = ({\boldsymbol{\beta}_{I_n} -\boldsymbol{\beta}_{J_n})^T (\boldsymbol{x}_{n}} + \boldsymbol{\Delta x}_{n})$, and define $g(s_n) = - \log(1+\exp(-s_n))$. Notice that $g(s_n)$ is strictly increasing with the increase in $s_n$. Hence, for each $n \in\mathcal{N}$, to minimize the objective function in Eq. \ref{eq_inner_binary}, we only need to minimize the following:
\begin{align}
\min_{\norm{\boldsymbol{\Delta x}_n}_p \leq \rho_n } -({\boldsymbol{\beta}_{I_n} -\boldsymbol{\beta}_{J_n})^T (\boldsymbol{x}_{n}} + \boldsymbol{\Delta x}_{n}) \quad \forall n \in \mathcal{N}.
  \label{eq_inner_binary_norm}
\end{align}

\begin{lemma}[Dual norm]
\emph{
\label{lemma_dual_norm}
Let $\boldsymbol{x}$ be a vector. $\norm{\boldsymbol{x}}_p$ is the $\ell_p$ norm of $\boldsymbol{x}$. Then, for any given vector $\boldsymbol{v}$, the dual norm problem is:
\begin{align}
    \max_{\norm{\boldsymbol{x}}_p \leq \rho} \{\boldsymbol{v}^T \boldsymbol{x}\} = \rho \cdot \norm{\boldsymbol{v}}_q, \qquad
    \min_{\norm{\boldsymbol{x}}_p \leq \rho} \{\boldsymbol{v}^T \boldsymbol{x}\} = -\rho \cdot \norm{\boldsymbol{v}}_q ,
\end{align}
where  $\norm{\cdot}_q$ is called the dual norm of $\norm{\cdot}_p$ and $\frac{1}{q}+\frac{1}{p} = 1$
}
\end{lemma}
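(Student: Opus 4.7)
The plan is to derive both equalities from Hölder's inequality combined with an explicit construction that attains the bound. First I would apply Hölder's inequality to obtain $\boldsymbol{v}^T \boldsymbol{x} \leq \norm{\boldsymbol{v}}_q \norm{\boldsymbol{x}}_p \leq \rho \norm{\boldsymbol{v}}_q$ for every feasible $\boldsymbol{x}$, giving the upper bound $\max \leq \rho \norm{\boldsymbol{v}}_q$ immediately.

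Next I would exhibit an explicit maximizer. Assuming $\boldsymbol{v} \neq \boldsymbol{0}$ (the other case is trivial since both sides vanish), in the generic regime $1 < p, q < \infty$ I would set
\[
x_i^\star = \frac{\rho}{\norm{\boldsymbol{v}}_q^{q-1}} \cdot \text{sign}(v_i) \cdot |v_i|^{q-1}
\]
and verify two things: (i) $\norm{\boldsymbol{x}^\star}_p = \rho$, using the conjugate-exponent relation $p(q-1) = q$ (equivalent to $1/p + 1/q = 1$); and (ii) $\boldsymbol{v}^T \boldsymbol{x}^\star = \rho \norm{\boldsymbol{v}}_q$, since $\sum_i v_i \cdot \text{sign}(v_i) |v_i|^{q-1} = \sum_i |v_i|^q = \norm{\boldsymbol{v}}_q^q$. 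For the boundary cases I would handle the two extremes separately: when $p=1$ and $q=\infty$, pick any index $i^\star \in \argmax_i |v_i|$ and set $\boldsymbol{x}^\star = \rho \cdot \text{sign}(v_{i^\star}) \boldsymbol{e}_{i^\star}$; when $p=\infty$ and $q=1$, set $x_i^\star = \rho \cdot \text{sign}(v_i)$ coordinate-wise. In each case feasibility and attainment of the upper bound are immediate from the definition of the respective norms.

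Finally, for the minimization identity I would exploit the fact that the feasible set $\{\boldsymbol{x}: \norm{\boldsymbol{x}}_p \leq \rho\}$ is symmetric under $\boldsymbol{x} \mapsto -\boldsymbol{x}$. Hence $\min_{\norm{\boldsymbol{x}}_p \leq \rho} \boldsymbol{v}^T \boldsymbol{x} = -\max_{\norm{\boldsymbol{x}}_p \leq \rho} \boldsymbol{v}^T \boldsymbol{x} = -\rho \norm{\boldsymbol{v}}_q$, or equivalently the minimizer is $-\boldsymbol{x}^\star$. The main obstacle here is minor: only the verification that $\norm{\boldsymbol{x}^\star}_p = \rho$ in the generic case involves a short algebraic check, which reduces entirely to the conjugate-exponent identity $p(q-1) = q$. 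Everything else is either a direct application of Hölder's inequality or a symmetry observation.
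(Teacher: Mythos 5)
Your proof is correct and follows essentially the same route as the paper, which simply states that the lemma ``is a direct result of H\"older's inequality'' and cites it without further detail. You fill in exactly the standard details the paper omits --- the H\"older upper bound, the explicit equality-attaining point (with the conjugate-exponent check $p(q-1)=q$ and the two boundary cases), and the symmetry argument for the minimum --- so there is nothing to correct.
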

Lemma \ref{lemma_dual_norm} is a direct result of Hölder's inequality \citep{holder1889ueber}. According to Lemma \ref{lemma_dual_norm}, the optimal objective function of Eq. \ref{eq_inner_binary_norm} is  
\begin{align}
   \min_{\norm{\boldsymbol{\Delta x}_n}_p \leq \rho_n } -({\boldsymbol{\beta}_{I_n} -\boldsymbol{\beta}_{J_n})^T (\boldsymbol{x}_{n}} + \boldsymbol{\Delta x}_{n}) =  -(\boldsymbol{\beta}_{I_n} -\boldsymbol{\beta}_{J_n})^T \boldsymbol{x}_{n} + \rho_n \norm{\boldsymbol{\beta}_{I_n} -\boldsymbol{\beta}_{J_n}}_q , \label{eq_dual_norm_max}
\end{align}
where $\frac{1}{q}+\frac{1}{p} = 1$. 

Substituting the optimal value into Eq. \ref{eq_inner_binary}, the robust binary logit model becomes:
\begin{align}
& \max_{\boldsymbol{\beta} \in \mathcal{B}}\sum_{n\in\mathcal{N}} -\log\left({ 1 +\exp\big(-({\boldsymbol{\beta}_{I_n} -\boldsymbol{\beta}_{J_n})^T\boldsymbol{x}_{n}} +\rho_n\norm{\boldsymbol{\beta}_{I_n} -\boldsymbol{\beta}_{J_n}}_q \big)}\right) \label{eq_bnl_logit} \\
\Longleftrightarrow &\max_{\boldsymbol{\beta} \in \mathcal{B}}\sum_{n\in\mathcal{N}} \log\left(\frac{\exp\big(\boldsymbol{\beta}_{I_n}^T \boldsymbol{x}_{n}\big)}{ \exp\big(\boldsymbol{\beta}_{I_n}^T \boldsymbol{x}_{n}\big)+\exp\big(\boldsymbol{\beta}_{J_n}^T\boldsymbol{x}_{n} +\rho_n\norm{\boldsymbol{\beta}_{I_n} -\boldsymbol{\beta}_{J_n}}_q \big)}\right) .
\label{eq_robust_bnl} 
\end{align}

\begin{remark}
Compared to the nominal BNL, the robust-feature counterpart of the BNL model has an additional $\rho_n \norm{\boldsymbol{\beta}_{I_n} -\boldsymbol{\beta}_{J_n}}_q$ term in the exponent of the logit function (Eq. \ref{eq_bnl_logit}). It resembles the $\ell_q$-regularization term in typical machine learning problems (such as logistic regression). However, the additional term from robustness penalizes model complexity in the log-odds ratio, whereas the typical regularization term is a linear penalty on the entire likelihood.
\end{remark}

To see the connections between the robust BNL and the typical regularization in machine learning, we can take the first-order Taylor approximation of Eq. \ref{eq_bnl_logit}. Define:
\begin{align}\label{eq_taylor1}
    h_n( z ) = -\log\left({ 1 +\exp\big(-({\boldsymbol{\beta}_{I_n} -\boldsymbol{\beta}_{J_n})^T\boldsymbol{x}_{n}} + z \big)}\right) .
\end{align}
Then the first-order Taylor approximation of $h_n(z)$ at $ z = 0$ is
\begin{align}
 h_n(z) \approx -\log\left({ 1 +\exp\big(-({\boldsymbol{\beta}_{I_n} -\boldsymbol{\beta}_{J_n})^T\boldsymbol{x}_{n}}  \big)}\right) - \frac{\exp\big(-({\boldsymbol{\beta}_{I_n} -\boldsymbol{\beta}_{J_n})^T\boldsymbol{x}_{n}}  \big)}{{ 1 +\exp\big(-({\boldsymbol{\beta}_{I_n} -\boldsymbol{\beta}_{J_n})^T\boldsymbol{x}_{n}}  \big)}} \cdot z .
\end{align}
Substitute $z = \rho_n\norm{\boldsymbol{\beta}_{I_n} -\boldsymbol{\beta}_{J_n}}_q$ we have:
\begin{align}\label{eq_taylor2}
  &\sum_{n\in\mathcal{N}}-\log\left({ 1 +\exp\big(-({\boldsymbol{\beta}_{I_n} -\boldsymbol{\beta}_{J_n})^T\boldsymbol{x}_{n}} + z \big)}\right) \notag \\
  &  \approx \sum_{n\in\mathcal{N}}\log\left(\frac{\exp\big(\boldsymbol{\beta}_{I_n}^T \boldsymbol{x}_{n}\big)}{ \exp\big(\boldsymbol{\beta}_{I_n}^T \boldsymbol{x}_{n}\big)+\exp\big(\boldsymbol{\beta}_{J_n}^T\boldsymbol{x}_{n} \big)}\right)  - \frac{\exp\big(({\boldsymbol{\beta}_{J_n} -\boldsymbol{\beta}_{I_n})^T\boldsymbol{x}_{n}}  \big)}{{ 1 +\exp\big(({\boldsymbol{\beta}_{J_n} -\boldsymbol{\beta}_{I_n})^T\boldsymbol{x}_{n}}  \big)}} \cdot \rho_n\norm{\boldsymbol{\beta}_{I_n} -\boldsymbol{\beta}_{J_n}}_q \notag \\
  & =\sum_{n\in\mathcal{N}} \log\left(P_{n,I_n}\right) - \sum_{n\in\mathcal{N}}\left(1 - P_{n,I_n}\right)\cdot \rho_n\norm{\boldsymbol{\beta}_{I_n} -\boldsymbol{\beta}_{J_n}}_q .
\end{align}
Therefore, when $\rho_n\norm{\boldsymbol{\beta}_{I_n} -\boldsymbol{\beta}_{J_n}}_q$ is small, the robust BNL is approximately equivalent to the $\ell_q$ regularization in machine learning problems with penalty coefficients as $\sum_{n\in\mathcal{N}}\left(1 - P_{n,I_n}\right)\cdot\rho_n$. This means that unlike typical machine learning regularization, this regularization effect will not diminish with increasing sample size.

\begin{remark}\label{remark_bnl_rho}
 When $\rho_n = 0$, the robust-feature BNL will fall back to the conventional BNL model. When $\rho_n = +\infty$ (which represents an extreme scenario where the data errors could go to infinity), the optimal value will be achieved when $\norm{\boldsymbol{\beta}_{I_n} -\boldsymbol{\beta}_{J_n}}_q = 0, \forall n\in\mathcal{N}$ (i.e., $\boldsymbol{\beta}_{I_n} =\boldsymbol{\beta}_{J_n}$). In DCM, a feature is actually only put in one alternative for estimation purposes (i.e., ${\beta}_{i}^{(k)} = 0$ or ${\beta}_{j}^{(k)} = 0$ for a feature $k\in\mathcal{K}$, $i,j\in\mathcal{C}$). Therefore, $\rho_n = +\infty$ will force the estimated $\boldsymbol{\beta}$ to be 0. 
\end{remark}

An extension of the robust BNL model is to consider a more general uncertainty set $\Tilde{\mathcal{Z}}$ with multiple norm constraints. Let the set of all norm constraints be $\mathcal{P}$, then
\begin{align}
    \Tilde{\mathcal{Z}} =  \prod_{n\in\mathcal{N}}\prod_{i\in\mathcal{P}}\{\boldsymbol{\Delta x}_n: \norm{\boldsymbol{\Delta x}_{n}}_{p_i} \leq \rho_{n}^{(i)}\} .
    \label{eq_general_uncertainty_set}
\end{align}
\begin{lemma}\label{lemma_dual_norm_extend}
[Dual norm with multiple constraints]
\emph{
Let $\boldsymbol{x}$ be a vector. $\norm{\boldsymbol{x}}_p$ is the $\ell_p$ norm of $\boldsymbol{x}$. Then, for any given vector $\boldsymbol{v}$, the dual norm problem with multiple constraints is:
\begin{align}
    \max_{\boldsymbol{x} \in \prod_{i\in\mathcal{P}}\{ \norm{\boldsymbol{x}}_{p_i} \leq \rho^{(i)}\}} \{\boldsymbol{v}^T \boldsymbol{x}\} =\min_{\boldsymbol{v}_i} \sum_{i \in \mathcal{P}}\rho^{(i)} \cdot \norm{\boldsymbol{v}_i}_{q_i} \quad \text{s.t.} \sum_{i\in\mathcal{P}}\boldsymbol{v}_i = \boldsymbol{v} , \\
    \min_{\boldsymbol{x} \in \prod_{i\in\mathcal{P}}\{ \norm{\boldsymbol{x}}_{p_i} \leq \rho^{(i)}\}} \{\boldsymbol{v}^T \boldsymbol{x}\} = \max_{\boldsymbol{v}_i} \sum_{i \in \mathcal{P}} -\rho^{(i)} \cdot \norm{\boldsymbol{v}_i}_{q_i} \quad \text{s.t.} \sum_{i\in\mathcal{P}}\boldsymbol{v}_i = \boldsymbol{v} ,
\end{align}
where  $\norm{\cdot}_{q_i}$ is called the dual norm of $\norm{\cdot}_{p_i}$ and $\frac{1}{q_i}+\frac{1}{p_i} = 1, \; \forall i \in \mathcal{P}$
}
\end{lemma}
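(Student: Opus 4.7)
The plan is to prove the first identity by a two-step weak-strong duality argument, then deduce the second identity by sign reversal. Let $C_i = \{\boldsymbol{x}: \norm{\boldsymbol{x}}_{p_i} \leq \rho^{(i)}\}$ denote each norm ball, and let the feasible region be the intersection $\bigcap_{i\in\mathcal{P}} C_i$ (which is the natural reading of the product notation, since $\boldsymbol{x}$ is a single vector that must satisfy all norm constraints simultaneously).

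First I would establish weak duality (LHS $\leq$ RHS) by a direct Hölder-inequality argument. For any feasible $\boldsymbol{x}$ and any decomposition $\boldsymbol{v} = \sum_{i\in\mathcal{P}} \boldsymbol{v}_i$,
\begin{align}
\boldsymbol{v}^T \boldsymbol{x} \;=\; \sum_{i\in\mathcal{P}} \boldsymbol{v}_i^T \boldsymbol{x} \;\leq\; \sum_{i\in\mathcal{P}} \norm{\boldsymbol{v}_i}_{q_i} \norm{\boldsymbol{x}}_{p_i} \;\leq\; \sum_{i\in\mathcal{P}} \rho^{(i)} \norm{\boldsymbol{v}_i}_{q_i},
\end{align}
where the first inequality is Hölder's and the second uses feasibility. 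Taking the supremum over feasible $\boldsymbol{x}$ and the infimum over admissible decompositions $\{\boldsymbol{v}_i\}$ gives the $\leq$ direction.

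For the reverse inequality I would invoke convex conjugate duality. Writing the primal as $\sup_{\boldsymbol{x}}\{\boldsymbol{v}^T\boldsymbol{x} - \sum_{i\in\mathcal{P}} \iota_{C_i}(\boldsymbol{x})\}$, where $\iota_{C_i}$ is the indicator function of $C_i$, this is precisely the convex conjugate $(\sum_i \iota_{C_i})^*(\boldsymbol{v})$. The infimal-convolution formula from Fenchel–Rockafellar duality then gives
\begin{align}
\Bigl(\sum_{i\in\mathcal{P}} \iota_{C_i}\Bigr)^{\!*}(\boldsymbol{v}) \;=\; \inf_{\sum_i \boldsymbol{v}_i = \boldsymbol{v}} \sum_{i\in\mathcal{P}} \iota_{C_i}^{*}(\boldsymbol{v}_i),
\end{align}
and each conjugate reduces to the support function $\iota_{C_i}^{*}(\boldsymbol{v}_i) = \sigma_{C_i}(\boldsymbol{v}_i) = \rho^{(i)} \norm{\boldsymbol{v}_i}_{q_i}$ by Lemma \ref{lemma_dual_norm}, delivering the desired identity.

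The main obstacle is verifying the constraint qualification that legitimizes the infimal-convolution identity (i.e.\ strong duality rather than mere weak duality). I would resolve this by noting that whenever $\rho^{(i)} > 0$ for every $i\in\mathcal{P}$, the origin lies in the interior of each $C_i$, so the effective domains of the $\iota_{C_i}$ share a common interior point; this is the standard Slater/regularity condition under which the conjugate of a finite sum equals the infimal convolution of the individual conjugates, with the infimum attained. Finally, the minimization identity follows by applying the maximization result to $-\boldsymbol{v}$ and using $\min_{\boldsymbol{x}} \boldsymbol{v}^T\boldsymbol{x} = -\max_{\boldsymbol{x}} (-\boldsymbol{v})^T\boldsymbol{x}$, together with the substitution $\boldsymbol{v}_i \mapsto -\boldsymbol{v}_i$ in the dual.
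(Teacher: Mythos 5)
Your proof is correct and amounts to the same argument the paper relies on: the paper's entire justification is a citation to Lemma 9 of Ben-Tal, den Hertog, and Vial (2015), which is precisely the statement that the support function of an intersection of convex sets equals the exact infimal convolution of the individual support functions, and your Fenchel-conjugacy derivation --- with the H\"older weak-duality check, the identification $\iota_{C_i}^{*}(\boldsymbol{v}_i)=\rho^{(i)}\norm{\boldsymbol{v}_i}_{q_i}$ via Lemma \ref{lemma_dual_norm}, and the Slater-type qualification from the origin lying in the interior of every ball when $\rho^{(i)}>0$ --- is the standard proof of that cited result. Your reading of the product notation as an intersection of the norm balls and the sign-flip reduction for the minimization identity are both exactly what the paper intends.
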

Lemma \ref{lemma_dual_norm_extend} is a direct result of Lemma 9 in \citet{ben2015deriving}. Therefore, Eq. \ref{eq_inner_binary_norm} with multiple uncertainty constraints can be reformulated as
\begin{align}
   & \min_{\prod_{i\in\mathcal{P}}\{\boldsymbol{\Delta x}_n:\; \norm{\boldsymbol{\Delta x}_{n}}_{p_i} \leq \rho_{n}^{(i)}\} }  
   -({\boldsymbol{\beta}_{I_n} -\boldsymbol{\beta}_{J_n})^T (\boldsymbol{x}_{n}} + \boldsymbol{\Delta x}_{n}) \notag \\
   & = \max_{\boldsymbol{w}_n^{(i)}}-(\boldsymbol{\beta}_{I_n} -\boldsymbol{\beta}_{J_n})^T \boldsymbol{x}_{n} + \sum_{i\in\mathcal{P}}\rho_n^{(i)} \norm{\boldsymbol{w}_n^{(i)}}_{q_i}, \quad  \text{s.t.} \sum_{i\in\mathcal{P}} \boldsymbol{w}_n^{(i)} = \boldsymbol{\beta}_{I_n} -\boldsymbol{\beta}_{J_n} ,\label{eq_dual_norm_max_extend}
\end{align}
where $\frac{1}{q_i}+\frac{1}{p_i} = 1, \forall i \in\mathcal{P}$. 
The final robust binary logit model with multiple uncertainty constraints becomes:
\begin{align}
& \max_{\boldsymbol{\beta} \in \mathcal{B},\; \boldsymbol{w}}\sum_{n\in\mathcal{N}} \log\left(\frac{\exp\big(\boldsymbol{\beta}_{I_n}^T \boldsymbol{x}_{n}\big)}{ \exp\big(\boldsymbol{\beta}_{I_n}^T \boldsymbol{x}_{n}\big)+\exp\big(\boldsymbol{\beta}_{J_n}^T\boldsymbol{x}_{n} +\sum_{i\in\mathcal{P}}\rho_n^{(i)} \norm{\boldsymbol{w}_n^{(i)}}_{q_i} \big)}\right) \notag \\
& \text{s.t.} \; \sum_{i\in\mathcal{P}}\boldsymbol{w}_n^{(i)} = \boldsymbol{\beta}_{I_n} -\boldsymbol{\beta}_{J_n} \quad \forall n \in \mathcal{N} .
\label{eq_robust_bnl_extension}
\end{align}
Two widely used examples for multiple-norm uncertainty sets are 1) ball-box uncertainty set (i.e., $\{\boldsymbol{\Delta x}_n: \norm{\boldsymbol{\Delta x}_{n}}_{2} \leq \rho_{n}^{(2)}, \norm{\boldsymbol{\Delta x}_{n}}_{\infty} \leq \rho_{n}^{(\infty)}\}$) and 2) budget (box-polyhedral) uncertainty set (i.e., $\{\boldsymbol{\Delta x}_n: \norm{\boldsymbol{\Delta x}_{n}}_{1} \leq \rho_{n}^{(1)}, \norm{\boldsymbol{\Delta x}_{n}}_{\infty} \leq \rho_{n}^{(\infty)}\}$) \citep{bertsimas2022robustbook}.
By including multiple norm constraints with proper hyper-parameter tuning, we could better capture the potential error patterns, and prevent the robust model from choosing unreasonable points as the worst-case scenario.

\subsection{Multinomial logit model}

The robustification of the multinomial logit model (MNL) is more difficult than the binary model. The inner maximization problem of the robust MNL is equivalent to ``maximizing a convex function''. For the robust BNL model, the convex function is monotonically increasing, which leads to a direct simplification of the problem. However, the MNL model cannot be simplified in a similar way. In this study, we approximate the robust MNL problem using Jensen's inequality, which results in a similar formulation as the robust BNL model. 

Similarly, let $I_n\in \mathcal{C}_n$ be the choice index for individual $n$. Replacing the objective function of Eq. \ref{eq_robust_mnl} by $t$, we have:
\begin{subequations}\label{eq_robust_mnl_with_t}
 \begin{align}
    \max_{\boldsymbol{\beta} \in \mathcal{B},\; t} & \quad t \\
     \text{s.t.} & \min_{\boldsymbol{\Delta x} \in \mathcal{Z}} \sum_{n\in\mathcal{N}} \left[-\log\left(\sum_{j\in\mathcal{C}_n}\exp\big(({\boldsymbol{\beta}_j-\boldsymbol{\beta}_{I_n})^T (\boldsymbol{x}_{n}+\boldsymbol{\Delta x}_{n})\big)}\right)\right] \geq t .
\end{align}   
\end{subequations}
Since the uncertainty sets are independent across individuals, similar to Eq. \ref{eq_inner_binary}, we have
\begin{align}
   & \min_{\boldsymbol{\Delta x} \in \mathcal{Z}} \sum_{n\in\mathcal{N}}-\log\left(\sum_{j\in\mathcal{C}_n}\exp\big(({\boldsymbol{\beta}_j-\boldsymbol{\beta}_{I_n})^T (\boldsymbol{x}_{n}+\boldsymbol{\Delta x}_{n})\big)}\right) \geq t \notag \\
   \Longleftrightarrow & \sum_{n\in\mathcal{N}} \min_{\substack{\boldsymbol{\Delta x}_n \in \mathcal{Z}_n }} -\log\left(\sum_{j\in\mathcal{C}_n}\exp\big(({\boldsymbol{\beta}_j-\boldsymbol{\beta}_{I_n})^T (\boldsymbol{x}_{n}+\boldsymbol{\Delta x}_{n})\big)}\right) \geq t .\label{eq_mnl_robust_const}
\end{align}
Eq. \ref{eq_mnl_robust_const} can be reformulated as:
\begin{align}
 \sum_{n\in\mathcal{N}}\max_{\substack{\boldsymbol{\Delta x}_n \in \mathcal{Z}_n }} \log\left(\sum_{j\in\mathcal{C}_n}\exp \big((\boldsymbol{\beta}_j-\boldsymbol{\beta}_{I_n})^T (\boldsymbol{x}_{n}+\boldsymbol{\Delta x}_{n})\big)\right) \leq -t .\label{eq_inner_min_ind}
\end{align}
Note that we eliminate the negative sign and change the formulation to a maximization problem.

Consider the inner maximization problem in Eq. \ref{eq_inner_min_ind}, according to the Jensen's inequality, we have:
\begin{align}\label{eq_jensen_ineq}
 \max_{\substack{\boldsymbol{\Delta x}_n \in \mathcal{Z}_n }} \log\left(\sum_{j\in\mathcal{C}_n}\exp \big((\boldsymbol{\beta}_j-\boldsymbol{\beta}_{I_n})^T (\boldsymbol{x}_{n}+\boldsymbol{\Delta x}_{n})\big)\right)  
 \leq
  \log\left(\sum_{j\in\mathcal{C}_n}\exp \big(\max_{\substack{\boldsymbol{\Delta x}_n \in \mathcal{Z}_n }}(\boldsymbol{\beta}_j-\boldsymbol{\beta}_{I_n})^T (\boldsymbol{x}_{n}+\boldsymbol{\Delta x}_{n})\big)\right) 
\end{align}
With the same derivation as Eq. \ref{eq_dual_norm_max}, we now have:
\begin{align}
    \max_{\norm{\boldsymbol{\Delta x}_n}_p \leq \rho_n}(\boldsymbol{\beta}_j-\boldsymbol{\beta}_{I_n})^T (\boldsymbol{x}_{n}+\boldsymbol{\Delta x}_{n}) = (\boldsymbol{\beta}_{j} -\boldsymbol{\beta}_{I_n})^T \boldsymbol{x}_{n} + \rho_n \norm{\boldsymbol{\beta}_{j} -\boldsymbol{\beta}_{I_n}}_q,
\end{align}
where $\frac{1}{q}+\frac{1}{p} = 1$. Therefore, Eq. \ref{eq_inner_min_ind} can be approximated as:
\begin{align}
\label{eq_jesen_const}
\sum_{n\in\mathcal{N}}\log\left(\sum_{j\in\mathcal{C}_n}\exp\bigg((\boldsymbol{\beta}_{j} -\boldsymbol{\beta}_{I_n})^T \boldsymbol{x}_{n} + \rho_n \norm{\boldsymbol{\beta}_{j} -\boldsymbol{\beta}_{I_n}}_q\bigg)\right)
 \leq -t .
\end{align}
And the approximation of the robust-feature DCM can be reformulated as:
\begin{align}\label{eq_robust_mnl_jesen}
    \max_{\boldsymbol{\beta}\in \mathcal{B}} & \sum_{n\in\mathcal{N}}\log\left(\frac{\exp(\boldsymbol{\beta}_{I_n}^T \boldsymbol{x}_{n}) }{\sum_{j\in\mathcal{C}_n}\exp(\boldsymbol{\beta}_j^T \boldsymbol{x}_{n} + \rho_{n} \norm{\bb_j - \bb_{I_n}}_q )}\right) .
\end{align}
It has a similar form as the robust BNL model (Eq. \ref{eq_robust_bnl}). Actually, when $\mathcal{C}=2$, the robust MNL problem will reduce to the robust BNL problem. However, robust BNL is an exact robust counterpart while robust MNL is an approximation.  

\begin{remark}
The solution of Eq. \ref{eq_robust_mnl_jesen} is a lower bound of the original robust MNL problem (Eq. \ref{eq_robust_mnl}). The reason is that, Constraint \ref{eq_jesen_const} is more restricted than the original constraint (Eq. \ref{eq_inner_min_ind}), which gives a smaller feasible region. Therefore, the optimal objective function in Eq. \ref{eq_robust_mnl_jesen} is smaller than that in Eq. \ref{eq_robust_mnl} (i.e., lower bound for a maximization problem). The inequality in Eq. \ref{eq_jensen_ineq} is tight when 1) $(\boldsymbol{\beta}_j-\boldsymbol{\beta}_{I_n})=(\boldsymbol{\beta}_i-\boldsymbol{\beta}_{I_n})$ for all $i,j\in\mathcal{C}_n$. 2) Or one $\bb_i - \bb_{I_n}$ significantly larger than others. (i.e., $\exists i^*$ such that $(\bb_{i^*} - \bb_{I_n})^T(\boldsymbol{x}_{n}+\boldsymbol{\Delta x}_{n}) \gg (\bb_{j} - \bb_{I_n})^T(\boldsymbol{x}_{n}+\boldsymbol{\Delta x}_{n}), j\neq i^*$. The first case implies $\bb = 0$, which will be achieved when $\rho_n \rightarrow \infty$. More analysis on the tightness of the inequality can be found in \ref{append_ineq_gap}. Moreover, we also derive an upper bound of the original robust MNL problem (\ref{append_upper}), combining with the lower bound could provide the optimality gap for the approximation. 
\end{remark}

Similar to the extension of Robust BNL, for a general uncertainty set $\Tilde{\mathcal{Z}}$ (Eq. \ref{eq_general_uncertainty_set}), the robust MNL problem is:
\begin{align}\label{eq_robust_mnl_jesen_extension}
&    \max_{\boldsymbol{\beta}\in \mathcal{B},\; \boldsymbol{w}} \sum_{n\in\mathcal{N}}\log\left(\frac{\exp(\boldsymbol{\beta}_{I_n}^T \boldsymbol{x}_{n}) }{\sum_{j\in\mathcal{C}_n}\exp(\boldsymbol{\beta}_j^T \boldsymbol{x}_{n} + \sum_{i\in\mathcal{P}} \rho_{n}^{(i)} \norm{\boldsymbol{w}_n^{(i, j)}}_q )}\right) \notag \\
& \text{s.t.} \; \sum_{i\in\mathcal{P}}\boldsymbol{w}_n^{(i, j)} = \boldsymbol{\beta}_{j} -\boldsymbol{\beta}_{I_n} \quad \forall n \in \mathcal{N}, \; \forall j\in\mathcal{C}_n .
\end{align}


It is worth noting that the robust-feature MNL model is solved under the worst-case scenario. Whether it works well or not depends on how the uncertainty set is defined and what are the error patterns in the testing data. To avoid solutions from being too conservative, we need to choose a proper value of $\rho_n$, or integrate multiple norm constraints to define the uncertainty set.

\section{Robustness against uncertainties in labels}\label{sec_label}
Section \ref{sec_feature_uncertainty} discusses the possible uncertainties in features (i.e., $\boldsymbol{\Delta x}$). The measurement errors may also apply to labels or individual choices (i.e., $\boldsymbol{\Delta y}$). In this study, we consider the following uncertainty set:
\begin{align}
    \mathcal{U}(\Gamma) = \{\boldsymbol{\Delta y}:&\sum_{j\in\mathcal{C}_n} \Delta y_{n,j} = 0,\; \Delta y_{n,I_n} \in \{0,-1\},\forall n \in \mathcal{N}; \notag\\
    &\Delta y_{n,j} \in \{0,1\}\; \forall j\in\mathcal{C}_n\setminus\{I_n\},\; \forall n \in \mathcal{N}; \notag \\
    &\sum_{{n \in \mathcal{N}}} -\Delta y_{n,I_n} \leq \Gamma \} .
\end{align}
Specially, if $\Delta y_{n,I_n} = -1$ and $\Delta y_{n,j} = 1$, it means that the individual's actual choice is $I_n\in\mathcal{C}_n$ but the data mislabeled it as $j\in\mathcal{C}_n\setminus \{I_n\}$. The uncertainty set $ \mathcal{U}$ represents that there are at most $\Gamma$ mislabeled samples. Then the robust DCM problem against feature uncertainty can be represented as:
\begin{align}\label{eq_original_rl}
    \max_{\boldsymbol{\beta}\in \mathcal{B}} & \min_{\boldsymbol{\Delta y} \in \mathcal{U}(\Gamma)}\sum_{n\in\mathcal{N}} \sum_{i\in\mathcal{C}_n} (y_{n,i}+\Delta y_{n,i}) \cdot \log(P_{n,i}(\boldsymbol{\beta})) .
\end{align}
The formulation is general for both BNL and MNL cases.
Consider the convex hull of $\mathcal{U}(\Gamma)$:
\begin{align}
        \text{Conv}\left(\mathcal{U}(\Gamma)\right) = \{\boldsymbol{\Delta y}:&\sum_{j\in\mathcal{C}_n} \Delta y_{n,j} = 0, -1 \leq \Delta y_{n,I_n}\leq 0,\forall n \in \mathcal{N}; \notag\\
    & 0 \leq \Delta y_{n,j} \leq 1\; \forall j\in\mathcal{C}_n\setminus\{I_n\},\; \forall n \in \mathcal{N}; \notag \\
    &\sum_{{n \in \mathcal{N}}} -\Delta y_{n,I_n} \leq \Gamma \} .
\end{align}
Because the inner minimization problem is linear in $\boldsymbol{\Delta y}$. And the extreme points for $\text{Conv}(\mathcal{U}\left(\Gamma)\right)$ are integers. Hence, the original inner minimization problem on $\mathcal{U}(\Gamma)$ is equivalent to minimizing over its convex hull:
\begin{subequations}\label{eq_opt_conv}
\begin{align}
    \min_{\boldsymbol{\Delta y}}&\sum_{n\in\mathcal{N}} \sum_{i\in\mathcal{C}_n} (y_{n,i}+\Delta y_{n,i}) \cdot \log(P_{n,i}(\boldsymbol{\beta})) \\
    \text{s.t. } & \Delta y_{n,I_{n}}+\sum_{j\in\mathcal{C}_n\setminus\{I_n\}} \Delta y_{n,j} = 0, \quad \forall n \in \mathcal{N}, \label{eq_const_convhull1}\\
    &0 \leq - \Delta y_{n,I_{n}} \leq 1, \quad \forall n \in \mathcal{N},\\
    &0 \leq \Delta y_{n,j} \leq 1, \quad  \forall n \in \mathcal{N}, \forall j\in\mathcal{C}_n\setminus\{I_n\},\label{eq_const_convhull2}\\
    &\sum_{{n \in \mathcal{N}}} -\Delta y_{n,I_n} \leq \Gamma.
\end{align}
\end{subequations}
As $\boldsymbol{\Delta y}$ is bounded, the optimal solution of Eq. \ref{eq_opt_conv} is also bounded for a given $\bb$. By strong duality, the optimal solution in Eq. \ref{eq_opt_conv} equals that of its dual problem
\begin{subequations}
\begin{align}
    \max_{\boldsymbol{\eta},\boldsymbol{\gamma},C}&\sum_{n\in\mathcal{N}} \sum_{i\in\mathcal{C}_n} y_{n,i} \cdot \log(P_{n,i}(\boldsymbol{\beta}))+\sum_{n\in\mathcal{N}} \sum_{i\in\mathcal{C}_n} \eta_{n,i}+\Gamma \cdot C \\
    \text{s.t. } & -\gamma_{n}+\eta_{n,I_{n}}+ C \leq \log(P_{n,I_{n}}(\boldsymbol{\beta})), \quad \forall n \in \mathcal{N},\label{eq_const1}\\
    & \gamma_{n}+\eta_{n,j} \leq \log(P_{n,j}(\boldsymbol{\beta})), \quad \forall n \in \mathcal{N}, \forall j\in\mathcal{C}_n\setminus\{I_n\},\label{eq_const3}\\
    &\eta_{n,i} \leq 0, \quad \forall n \in \mathcal{N}, i \in \mathcal{C}_n,\label{eq_const4}\\
    &C \leq 0,\label{eq_const2}
\end{align}
\end{subequations}
where $\boldsymbol{\gamma} = (\gamma_n)_{n\in\mathcal{N}}$, $\boldsymbol{\eta} = (\eta_{n,i})_{n\in\mathcal{N}, i\in\mathcal{C}_n}$, and $C$ are dual decision variables. 

The final formulation is just a combination of the inner and outer problems:
\begin{subequations}\label{eq_robust_label_mnl}
\begin{align}
    \max_{\boldsymbol{\beta}\in\mathcal{B}, \boldsymbol{\eta},\boldsymbol{\gamma},C }&\sum_{n\in\mathcal{N}}  \log\left(\frac{\exp(\boldsymbol{\beta}_{I_n}^T \boldsymbol{x}_{n}) }{\sum_{j\in\mathcal{C}_n}\exp(\boldsymbol{\beta}_j^T \boldsymbol{x}_{n} )}\right)+\sum_{n\in\mathcal{N}} \sum_{i\in\mathcal{C}_n} \eta_{n,i}+\Gamma \cdot C \\
    \text{s.t. } & \text{Constraints } \ref{eq_const1} \sim \ref{eq_const2} .
\end{align}
\end{subequations}
This problem has a twice continuously differentiable concave objective function and constraints, making it tractably solvable with an interior point method.

\begin{remark}\label{remark_robust_label_gamma}
When $\Gamma = 0$, the optimal solution for $\boldsymbol{\eta}$ is $0$ because $\boldsymbol{\gamma}$ and $C$ become free variables. Then constraints \ref{eq_const1} and \ref{eq_const3} do not restrict $\boldsymbol{\eta}$ to take its maximum value. Therefore, when $\Gamma = 0$ (i.e., no uncertainty), Eq. \ref{eq_robust_label_mnl} is equivalent to the nominal MNL model, which validates the formulation. 

\end{remark}

\section{Statistical properties of the robust DCM estimators}\label{sec_stats}
In the context of econometrics, it is often of interest to understand the statistical properties of an estimator (i.e., consistency and efficiency). In this study, we first show that robust formulations (both robust feature and robust label) tend to shrink the scale of the estimated $\bb$ compared to nominal formulations, which implies that they are biased and inconsistent estimators. However, we also show that this shrinkage reduces variances and could make the model perform better in out-of-sample predictions, especially when there are errors in the testing data.  

\subsection{Statistical properties of the robust feature estimators}
It is worth noting that the robust BNL formulation (Eq. \ref{eq_robust_bnl}) is a special case of the robust MNL formulation (Eq. \ref{eq_robust_mnl_jesen}). Therefore, we only need to discuss the properties of robust MNL formulations.

\begin{prop}\label{prop_inconsist}

Let $\hat{\boldsymbol{\beta}}^{\text{RF}}$ be the solution of Eq. \ref{eq_robust_mnl_jesen} (robust feature MNL problem). $\hat{\boldsymbol{\beta}}^{\text{RF}}$ is an inconsistent estimate of the actual parameter. 
\end{prop}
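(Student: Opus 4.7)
The plan is to invoke standard $M$-estimator consistency theory. Under continuity in $\bb$, dominated integrability of the integrand, and uniqueness of the population maximizer, the sample maximizer $\hat{\bb}^{\text{RF}}$ of Eq. \ref{eq_robust_mnl_jesen} converges in probability to $\bb^{\text{RF},\infty} := \argmax_{\bb\in\mathcal{B}} L^R(\bb)$, where
\begin{equation*}
L^R(\bb) = \mathbb{E}\left[\bb_{I_n}^T \bx_{n} - \log \sum_{j \in \mathcal{C}_n} \exp\bigl(\bb_j^T \bx_{n} + \rho_n \norm{\bb_j - \bb_{I_n}}_q\bigr)\right]
\end{equation*}
is the population counterpart of the sample objective. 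The inconsistency claim therefore reduces to exhibiting $\bb^{\text{RF},\infty} \neq \bb^{*}$, where $\bb^{*}$ is the data-generating parameter, uniquely characterized by the information inequality as the maximizer of the nominal population log-likelihood $L(\bb) = \mathbb{E}[\log P_{n,I_n}(\bb)]$.

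I would decompose $L^R(\bb) = L(\bb) - R(\bb)$ with
\begin{equation*}
R(\bb) = \mathbb{E}\left[\log \frac{\sum_j \exp(\bb_j^T \bx_n + \rho_n \norm{\bb_j - \bb_{I_n}}_q)}{\sum_j \exp(\bb_j^T \bx_n)}\right] \geq 0,
\end{equation*}
where nonnegativity follows from the monotonicity of log-sum-exp in each argument. Since $\bb^{*}$ is a stationary point of the nominal objective, $\nabla L(\bb^{*}) = 0$, so $\nabla L^R(\bb^{*}) = -\nabla R(\bb^{*})$; the inconsistency question thus collapses to proving $\nabla R(\bb^{*}) \neq 0$. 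A convenient route is the scaling-direction argument: define $\phi(t) := R(t\bb^{*})$ for $t \geq 0$, use positive homogeneity of $\norm{\cdot}_q$ to write $\phi$ explicitly, and observe that $\phi(0) = 0$ while $\phi(t) > 0$ for every $t > 0$ whenever some $\bb_j^{*} \neq \bb_{I_n}^{*}$ with positive probability (the non-degenerate setting). Combined with smoothness, this would yield $\phi'(1) > 0$, hence $\nabla R(\bb^{*}) \cdot \bb^{*} > 0$ and in particular $\nabla R(\bb^{*}) \neq 0$. The same analysis also gives the \emph{shrinkage} interpretation the authors emphasize: $\phi'(1) > 0$ forces the scaled maximizer to sit at some $t^{*} < 1$, so the population robust optimum is a scaled-down version of $\bb^{*}$.

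The main obstacle is rigorously signing $\phi'(1)$. Writing $\phi'(t) = \mathbb{E}\bigl[\mathbb{E}_{\pi^R_t}[\bb_j^{*T}\bx_n + \rho_n \norm{\bb_j^{*} - \bb_{I_n}^{*}}_q] - \mathbb{E}_{\pi_t}[\bb_j^{*T}\bx_n]\bigr]$, where $\pi^R_t$ and $\pi_t$ denote the robust and nominal softmax weights at scale $t$, one gets a strictly positive \emph{penalty-drift} piece $\mathbb{E}_{\pi^R_t}[\rho_n \norm{\bb_j^{*} - \bb_{I_n}^{*}}_q]$ plus a \emph{reweighting} piece $\mathbb{E}_{\pi^R_t - \pi_t}[\bb_j^{*T}\bx_n]$ whose sign is not transparent and could a priori partially offset it. I would handle this in two stages. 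First, treat the BNL case (a special instance of Eq. \ref{eq_robust_mnl_jesen}) in which $L^R$ collapses to $-\mathbb{E}[\log(1 + \exp(-y_n \alpha^{*T}\bx_n + \rho_n \norm{\alpha^{*}}_q))]$ with $\alpha = \bb_{I_n} - \bb_{J_n}$; here $\nabla R(\bb^{*})$ separates into an explicit dual-norm vector $\rho_n \partial\norm{\alpha^{*}}_q$ weighted by the strictly positive quantity $\mathbb{E}[1 - \sigma(y_n\alpha^{*T}\bx_n - \rho_n\norm{\alpha^{*}}_q)]$, which cannot vanish unless $\alpha^{*} = 0$. Second, lift this to MNL by imposing a mild non-degeneracy condition on the feature distribution (e.g., full-rank covariance of $\bx_n$ and $\bb_j^{*} \neq \bb_{I_n}^{*}$ for at least one pair) to ensure the penalty-drift term dominates the covariance piece at $t = 1$. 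Once $\nabla L^R(\bb^{*}) \neq 0$ is established, $\bb^{*}$ fails the FOC for $L^R$, so $\bb^{\text{RF},\infty} \neq \bb^{*}$, and $M$-estimator consistency then delivers $\hat{\bb}^{\text{RF}} \to \bb^{\text{RF},\infty} \neq \bb^{*}$, completing the proof.
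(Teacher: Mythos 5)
Your skeleton is the same as the paper's: rewrite the robust objective as the nominal log-likelihood minus a nonnegative, non-vanishing penalty $R(\bb)=\mathbb{E}\bigl[\log\bigl(\sum_j\exp(\tilde{\bb}_j^T\bx_n+\rho_n\norm{\tilde{\bb}_j}_q)/\sum_j\exp(\tilde{\bb}_j^T\bx_n)\bigr)\bigr]$ and argue that this pulls the maximizer away from $\bb^*$. The paper stops exactly there: it observes that the multiplicative factor $\exp(\rho_n\norm{\tilde{\bb}_j}_q)$ does not diminish as $|\mathcal{N}|\to\infty$ and simply declares inconsistency. You correctly identify that this is not yet a proof --- a non-vanishing perturbation of the objective does not by itself move the population maximizer --- and you reduce the claim to the right statement, namely $\nabla R(\bb^*)\neq 0$ (so that $\bb^*$ fails the first-order condition for $L^R$). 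That reduction, together with the $M$-estimator framing, is a genuine improvement in rigor over the paper.

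The gap is that you do not actually establish $\nabla R(\bb^*)\neq 0$, and the two arguments you sketch for it both fail as stated. First, the scaling argument is logically invalid: $\phi(0)=0$, $\phi(t)>0$ for all $t>0$, and smoothness do \emph{not} imply $\phi'(1)>0$; a smooth positive function on $(0,\infty)$ can be decreasing at $t=1$. Second, in the BNL reduction the gradient of $R$ is \emph{not} just the dual-norm vector $\rho\,\nabla\norm{\alpha^*}_q$ times a positive scalar: differentiating $\mathbb{E}[\log(1+e^{-y\alpha^T\bx+\rho\norm{\alpha}_q})]-\mathbb{E}[\log(1+e^{-y\alpha^T\bx})]$ produces, in addition, the reweighting term $\mathbb{E}\bigl[(\sigma(-y\alpha^{*T}\bx+\rho\norm{\alpha^*}_q)-\sigma(-y\alpha^{*T}\bx))(-y\bx)\bigr]$, which you drop; a priori it could cancel the penalty-drift piece in some direction. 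Your proposed fix for MNL ("a mild non-degeneracy condition ... to ensure the penalty-drift term dominates") is a placeholder, not an argument --- no such condition is exhibited or verified. To close the proof you would need to either (i) sign the full directional derivative $\phi'(1)$, e.g.\ by showing the reweighting piece is itself nonnegative along the direction $\bb^*$ (plausible via a covariance/monotonicity argument, since the robust softmax downweights exactly the alternatives with large $\tilde{\bb}_j^{*T}\bx_n$, but this must be written out), or (ii) argue globally rather than via first-order conditions, e.g.\ by comparing $L^R(\bb^*)$ with $L^R(t\bb^*)$ for some explicit $t<1$. As it stands the proposal identifies the correct obstruction but leaves it open; to be fair, the paper's own proof never confronts this obstruction at all.
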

\begin{proof}

For simplicity of description, we first consider a single sample $n$. Let us rewrite the objective function of Eq. \ref{eq_robust_mnl_jesen} for sample $n$ as 
\begin{align}\label{eq_ll_rf}
    \mathcal{LL}^{\text{RF}}_n = - \log \sum_{j\in\mathcal{C}_n}\exp(\left(\boldsymbol{\beta}_j - \bb_{I_n}\right)^T \boldsymbol{x}_{n} + \rho_{n} \norm{\bb_j - \bb_{I_n}}_q ).
\end{align}
Without loss of generality, let us assume all samples' choices are $I_n = I\in\mathcal{C}$, and define $\tilde{\bb}_j = (\bb_j - \bb_I)$. Then Eq. \ref{eq_ll_rf} can be transformed to:
\begin{align}\label{eq_ll_rf2}
    \mathcal{LL}^{\text{RF}}_n(\tilde{\bb}) = - \log \sum_{j\in\mathcal{C}_n}\exp\left(\tilde{\bb}_j^T \boldsymbol{x}_{n} + \rho_{n} \norm{\tilde{\bb}_j}_q \right).
\end{align}
Then maximizing Eq. \ref{eq_ll_rf2} is essentially minimizing
$\log \sum_{j\in\mathcal{C}_n}\exp\left(\tilde{\bb}_j^T \boldsymbol{x}_{n}\right) \cdot \exp\left(\rho_{n} \norm{\tilde{\bb}_j}_q \right)$. Compared to the nominal MNL model, there is an additional bias term $\exp(\rho_{n} \norm{\tilde{\bb}_j}_q)$ for every sample $n$, and it does not diminish as $|\mathcal{N}|$ goes to infinity. Hence, the robust-feature MNL estimator is inconsistent. And by definition, it is also inefficient. 

\end{proof}

\begin{remark}

Note that the value of $\hat{\boldsymbol{\beta}}^{\text{RF}}$ depends on external hyper-parameter $\rho_n$. Let $\hat{\tilde{\bb}}^{\text{RF}}$ be the transformation of $\hat{{\bb}}^{\text{RF}}$ as above. As $\rho_n$ goes to infinity, the optimal solution will be achieved at $\hat{\tilde{\bb}}^{\text{RF}} = 0$, which implies $\hat{{\bb}}^{\text{RF}}_j = \hat{{\bb}}^{\text{RF}}_I$ for all $j\in\mathcal{C}$. In the specification of DCM, one of the alternatives (i.e., the base alternative) will have a fixed coefficient of feature $k\in\mathcal{K}$ to be zero (to avoid perfect co-linearity). Therefore, $\exists i \in \mathcal{C}$ such that $\hat{{\bb}}_{i}^{\text{RF},(k)} = 0, \; \forall k \in\mathcal{K}$. Then $\hat{{\bb}}^{\text{RF}}_j = \hat{{\bb}}^{\text{RF}}_I$ is equivalent to $\hat{{\bb}}^{\text{RF}} = 0$. Therefore, a larger value of $\rho_n$ will shrink $\hat{\boldsymbol{\beta}}^{\text{RF}}$ toward 0.  

\end{remark}

\begin{remark}

We may also consider the case when the size of the uncertainty set goes to zero (i.e., $\rho_n \rightarrow 0$). In this case, the bias term has $\lim_{\rho_n \rightarrow 0} \exp(\rho_{n} \norm{\tilde{\bb}_j}_q) = 1$ because the function is continuous in $\rho_n$. Then the objective function becomes $\log \sum_{j\in\mathcal{C}_n}\exp\left(\tilde{\bb}_j^T \boldsymbol{x}_{n}\right)$, which reduces to the nominal MNL model. Therefore, when the size of the uncertainty set goes to zero, the robust formulation will converge to the consistent and unbiased MLE estimator. This is similar to the case where the distributional robust optimization reduces to the empirical risk minimization or stochastic optimization as the ambiguity set shrinks toward zero \citep{sun2016convergence,mohajerin2018data}.

\end{remark}

However, just like how regularization works in machine learning, a biased estimator may perform better in out-of-sample predictions due to lower estimated variances. Rigorously proving the relationship of variance between two estimators is difficult. In Proposition \ref{prop_info_matrix}, we show that the robust estimator yields a larger trace of the Fisher information matrix than the original MLE estimator, which in general could imply lower variances.  

\begin{prop}\label{prop_info_matrix}
Let $\mathcal{I}_{\text{RF}}(\cdot)$ and $\mathcal{I}_{\text{MLE}}(\cdot)$ be the Fisher information matrix of the robust-feature MNL estimator and the original MLE estimator, respectively. If $\rho_n$ is large enough for all $n\in\mathcal{N}$, we have
\begin{align}
    \text{Tr}\left(\mathcal{I}_{\text{RF}}(\boldsymbol{\hat{\boldsymbol{\beta}}^{\text{RF}}})\right) \geq     \text{Tr}\left(\mathcal{I}_{\text{MLE}}(\boldsymbol{\hat{\boldsymbol{\beta}}^{\text{MLE}}})\right).
\end{align}
where $\boldsymbol{\hat{\boldsymbol{\beta}}^{\text{RF}}}$ and $\boldsymbol{\hat{\boldsymbol{\beta}}^{\text{MLE}}}$ are corresponding estimated parameters. $\text{Tr}(\cdot)$ is the trace of a matrix. The specific conditions for $\rho_n$ are provided in the proof.
\end{prop}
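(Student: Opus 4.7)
My plan is to compute the trace of the Fisher information matrix of the MNL likelihood explicitly, observe that it is maximized when the predicted probabilities are uniform, and then leverage the shrinkage behavior of $\hat{\bb}^{\text{RF}}$ established in Proposition \ref{prop_inconsist} and the surrounding remarks to argue that the robust estimator yields a more uniform probability vector per sample (and hence a larger trace) than the MLE.

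The first step is purely algebraic. Differentiating the MNL log-likelihood twice with respect to the stacked coefficient vector $\bb = (\bb_j)_{j \in \mathcal{C}}$ produces diagonal blocks $\sum_n P_{n,j}(1-P_{n,j}) \boldsymbol{x}_n \boldsymbol{x}_n^T$ and off-diagonal blocks $-\sum_n P_{n,j} P_{n,k} \boldsymbol{x}_n \boldsymbol{x}_n^T$ for $j \neq k$. Only the diagonal blocks contribute to the trace, so
\begin{align*}
\text{Tr}\bigl(\mathcal{I}(\bb)\bigr) = \sum_{n \in \mathcal{N}} \|\boldsymbol{x}_n\|_2^{2} \left(1 - \sum_{j \in \mathcal{C}_n} P_{n,j}(\bb)^{2}\right).
\end{align*}
The factor $1 - \sum_j P_{n,j}^{2}$ is the Gini impurity of the predicted distribution; it is strictly Schur-concave on the simplex and attains its maximum $1 - 1/|\mathcal{C}_n|$ at the uniform distribution. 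Hence the trace increases monotonically (per sample) as the predicted probability vector moves closer to uniform in the majorization order.

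The second step is to invoke the shrinkage property of $\hat{\bb}^{\text{RF}}$. As $\rho_n$ grows, the penalty $\rho_n \|\tilde{\bb}_j\|_q$ in (\ref{eq_ll_rf2}) dominates the utility term, forcing $\hat{\tilde{\bb}}^{\text{RF}} \to 0$ and hence $P_{n,j}(\hat{\bb}^{\text{RF}}) \to 1/|\mathcal{C}_n|$ by continuity of the softmax. I would define $\rho_n^\star$ as the smallest value under which the probability vector $\bigl(P_{n,j}(\hat{\bb}^{\text{RF}})\bigr)_{j \in \mathcal{C}_n}$ is majorized by $\bigl(P_{n,j}(\hat{\bb}^{\text{MLE}})\bigr)_{j \in \mathcal{C}_n}$ for every $n$; the proposition's ``$\rho_n$ large enough'' then corresponds to $\rho_n \geq \rho_n^\star$. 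By Schur-concavity this majorization implies the per-sample inequality $1 - \sum_j P_{n,j}(\hat{\bb}^{\text{RF}})^{2} \geq 1 - \sum_j P_{n,j}(\hat{\bb}^{\text{MLE}})^{2}$, and summing over $n$ against the nonnegative weights $\|\boldsymbol{x}_n\|_2^{2}$ yields the claimed trace inequality.

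The main obstacle is making the shrinkage step quantitatively rigorous: a norm bound on $\hat{\tilde{\bb}}^{\text{RF}}$ does not automatically translate into majorization of the softmax output, which is exactly what is needed to invoke Schur-concavity. I would address this by comparing the KKT conditions of (\ref{eq_robust_mnl_jesen}) with those of the nominal MNL problem, showing that the penalty term uniformly contracts each log-odds gap $(\hat{\bb}_j^{\text{RF}} - \hat{\bb}_{I_n}^{\text{RF}})^T \boldsymbol{x}_n$ toward zero as $\rho_n$ grows. A clean sufficient condition for sample-wise majorization is that $|(\hat{\bb}_j^{\text{RF}} - \hat{\bb}_k^{\text{RF}})^T \boldsymbol{x}_n| \leq |(\hat{\bb}_j^{\text{MLE}} - \hat{\bb}_k^{\text{MLE}})^T \boldsymbol{x}_n|$ for every $n$ and every pair $j,k \in \mathcal{C}_n$; this is the quantitative threshold I would state as the explicit requirement on $\rho_n$ in the proof.
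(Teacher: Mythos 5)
There is a genuine gap, and it originates in how you define $\mathcal{I}_{\text{RF}}$. You differentiate the \emph{nominal} MNL log-likelihood twice and evaluate the resulting matrix at the two estimates, obtaining $\text{Tr}\bigl(\mathcal{I}(\bb)\bigr)=\sum_{n}\norm{\boldsymbol{x}_n}_2^2\bigl(1-\sum_{j}P_{n,j}(\bb)^2\bigr)$, and then compare Gini impurities. The paper instead takes $\mathcal{I}_{\text{RF}}$ to be the negative Hessian of the \emph{robust} objective (Eq.~\ref{eq_ll_rf}), whose exponent contains the penalty $\rho_n\norm{\tilde{\bb}_j}_q$. Differentiating that objective produces additional terms in every diagonal entry: the $(i,k)$ entry becomes $\sum_{n}\tilde{P}^{\text{Norm}}_{i,n}(1-\tilde{P}^{\text{Norm}}_{i,n})\bigl[(\boldsymbol{x}_n^{(k)})^2+2\rho_n \boldsymbol{x}_n^{(k)}\bigl(\nabla_{\tilde{\bb}_i}\norm{\tilde{\bb}_i}_q\bigr)_k+\rho_n^2\bigl(\nabla_{\tilde{\bb}_i}\norm{\tilde{\bb}_i}_q\bigr)_k^2\bigr]+\tilde{P}^{\text{Norm}}_{i,n}\rho_n\bigl(\nabla^2_{\tilde{\bb}_i}\norm{\tilde{\bb}_i}_q\bigr)_{k,k}$. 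The cross term $2\rho_n \boldsymbol{x}_n^{(k)}\bigl(\nabla\norm{\cdot}_q\bigr)_k$ can be negative, and controlling its sign is precisely the purpose of the paper's second condition $\rho_n\geq 2\bigl|\boldsymbol{x}_n^{(k)}/(\nabla_{\tilde{\bb}_i}\norm{\tilde{\bb}_i}_q)_k\bigr|$. Your proof never encounters these terms, so under the paper's definition it proves a different inequality than the one claimed, and the ``specific conditions for $\rho_n$'' promised in the statement cannot be recovered from your argument.

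The portion of your argument that does overlap with the paper---the comparison of the coefficients multiplying the shared $(\boldsymbol{x}_n^{(k)})^2$ term---is actually handled more carefully than in the paper. The paper merely notes that $\sum_i p_i(1-p_i)$ is maximized at the uniform distribution and asserts that shrinkage pushes $\tilde{P}^{\text{Norm}}_{i,n}$ toward $1/|\mathcal{C}|$; your Schur-concavity and majorization formulation makes the required monotonicity precise, and your remark that a norm bound on $\hat{\tilde{\bb}}^{\text{RF}}$ does not by itself yield majorization of the softmax output identifies a real weakness in the paper's own Condition~1 (both your $\rho_n^\star$ and the paper's Condition~1 are, in effect, assumptions rather than derived thresholds, since $\hat{\bb}^{\text{RF}}$ is not a scalar rescaling of $\hat{\bb}^{\text{MLE}}$). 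To repair the proof while keeping your machinery, retain the majorization argument for the shared term, but add the computation of the penalty-induced Hessian contributions, show the second-order term $\tilde{P}^{\text{Norm}}_{i,n}\rho_n(\nabla^2\norm{\cdot}_q)_{k,k}$ is nonnegative by convexity of the norm, and impose the explicit lower bound on $\rho_n$ that makes $2\rho_n \boldsymbol{x}_n^{(k)}(\nabla\norm{\cdot}_q)_k+\rho_n^2(\nabla\norm{\cdot}_q)_k^2\geq 0$.
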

\begin{proof}
Using the same transformation of $\bb$ as in the proof of Proposition \ref{prop_inconsist}, we define $\hat{\tilde{\boldsymbol{\beta}}}^{\text{RF}}$ and $\hat{\tilde{\boldsymbol{\beta}}}^{\text{MLE}}$, where  $\hat{\tilde{\boldsymbol{\beta}}}^{\text{RF}}_j = \hat{{\boldsymbol{\beta}}}^{\text{RF}}_j - \hat{{\boldsymbol{\beta}}}^{\text{RF}}_I$ and $\hat{\tilde{\boldsymbol{\beta}}}^{\text{MLE}}_j = \hat{{\boldsymbol{\beta}}}^{\text{MLE}}_j - \hat{{\boldsymbol{\beta}}}^{\text{MLE}}_I$ (for all $j\in\mathcal{C}$). Notice that this transformation is an orthogonal linear mapping: $\hat{\tilde{\boldsymbol{\beta}}}^{\text{RF}} = \boldsymbol{A}\cdot \hat{\boldsymbol{\beta}}^{\text{RF}}$, where $\boldsymbol{A}$ is an orthogonal matrix and $ \boldsymbol{A}^T\cdot \boldsymbol{A} = \boldsymbol{I}$. Therefore, to show $\text{Tr}\left(\mathcal{I}_{\text{RF}}(\boldsymbol{\hat{\boldsymbol{\beta}}^{\text{RF}}})\right) \geq     \text{Tr}\left(\mathcal{I}_{\text{MLE}}(\boldsymbol{\hat{\boldsymbol{\beta}}^{\text{MLE}}})\right)$, we only need to show  $\text{Tr}\left(\mathcal{I}_{\text{RF}}(\hat{\tilde{\boldsymbol{\beta}}}^{\text{RF}})\right) \geq     \text{Tr}\left(\mathcal{I}_{\text{MLE}}(\hat{\tilde{\boldsymbol{\beta}}}^{\text{MLE}})\right)$ because the Fisher information matrix is essentially the negative Hessian matrix, and orthogonal linear mapping does not change the eigenvalues of the Hessian matrix, thus the trace (sum of eigenvalues) are still the same.

Similar to Eq. \ref{eq_ll_rf}, let $\mathcal{LL}^{\text{MLE}}_n$ be the log-likelihood of the original MNL model for sample $n$, we can also transfer it to  
\begin{align}\label{eq_ll_mnl}
    \mathcal{LL}^{\text{MLE}}_n(\tilde{\bb}) = - \log \sum_{j\in\mathcal{C}_n}\exp\left(\tilde{\bb}_j^T \boldsymbol{x}_{n}\right).
\end{align}
Calculating the first derivatives of Eqs. \ref{eq_ll_rf} and \ref{eq_ll_mnl} gives:
\begin{align}
    & \frac{\partial \mathcal{LL}^{\text{RF}}_n}{\partial \tilde{\bb}_i} = -\frac{\exp\left(\tilde{\bb}_i^T \boldsymbol{x}_{n} + \rho_{n} \norm{\tilde{\bb}_i}_q\right)\cdot \left(\boldsymbol{x}_{n} + \rho_{n}\cdot \nabla_{\tilde{\bb}_i}\norm{\tilde{\bb}_i}_q  \right) }{ \sum_{j\in\mathcal{C}_n}\exp\left(\tilde{\bb}_j^T \boldsymbol{x}_{n} + \rho_{n} \norm{\tilde{\bb}_j}_q\right)} = -\tilde{P}_{i,n}^{\text{Norm}}\cdot \left(\boldsymbol{x}_n + \rho_{n}\cdot \nabla_{\tilde{\bb}_i}\norm{\tilde{\bb}_i}_q \right), \\
    & \frac{\partial \mathcal{LL}^{\text{MLE}}_n}{\partial \tilde{\bb}_i} = -\frac{\exp\left(\tilde{\bb}_i^T \boldsymbol{x}_{n}\right)\cdot \boldsymbol{x}_{n}}{ \sum_{j\in\mathcal{C}_n}\exp\left(\tilde{\bb}_j^T \boldsymbol{x}_{n}\right)} = -\tilde{P}_{i,n}\cdot \boldsymbol{x}_{n},
\end{align}
where $\tilde{P}_{i,n}^{\text{Norm}} := \frac{\exp\left(\tilde{\bb}_i^T \boldsymbol{x}_{n} + \rho_{n} \norm{\tilde{\bb}_i}_q\right)}{\sum_{j\in\mathcal{C}_n}\exp\left(\tilde{\bb}_j^T \boldsymbol{x}_{n} + \rho_{n} \norm{\tilde{\bb}_j}_q\right)}$ and $\tilde{P}_{i,n} := \frac{\exp\left(\tilde{\bb}_i^T \boldsymbol{x}_{n} \right)}{\sum_{j\in\mathcal{C}_n}\exp\left(\tilde{\bb}_j^T \boldsymbol{x}_{n} \right)}$ are auxiliary variables. For the second derivatives, we first consider the diagonal terms: 
\begin{align}\label{eq_info_rf_ik}
  \frac{\partial^2 \mathcal{LL}^{\text{RF}}_n}{\partial (\tilde{\bb}_i)^2} =&  -\tilde{P}_{i,n}^{\text{Norm}}\cdot(1-\tilde{P}_{i,n}^{\text{Norm}})\cdot\left(\boldsymbol{x}_n + \rho_{n}\cdot \nabla_{\tilde{\bb}_i}\norm{\tilde{\bb}_i}_q \right)\cdot \left(\boldsymbol{x}_n + \rho_{n}\cdot \nabla_{\tilde{\bb}_i}\norm{\tilde{\bb}_i}_q \right)^T  \notag \\
   & - \tilde{P}_{i,n}^{\text{Norm}}\cdot\rho_{n}\cdot\nabla^2_{\tilde{\bb}_i}\norm{\tilde{\bb}_i}_q ,\\
  \frac{\partial^2 \mathcal{LL}^{\text{MLE}}_n}{\partial (\tilde{\bb}_i)^2} =&  -\tilde{P}_{i,n}\cdot(1-\tilde{P}_{i,n})\cdot\boldsymbol{x}_n\cdot \boldsymbol{x}_n^T .
\end{align}
For off-diagonal terms:
\begin{align}
   \frac{\partial^2 \mathcal{LL}^{\text{RF}}_n}{\partial \tilde{\bb}_i\partial \tilde{\bb}_j} =&  \tilde{P}_{i,n}^{\text{Norm}}\cdot\tilde{P}_{j,n}^{\text{Norm}}\cdot\left(\boldsymbol{x}_n + \rho_{n}\cdot \nabla_{\tilde{\bb}_j}\norm{\tilde{\bb}_j}_q \right)\cdot \left(\boldsymbol{x}_n + \rho_{n}\cdot \nabla_{\tilde{\bb}_i}\norm{\tilde{\bb}_i}_q \right)^T  , \\
   \frac{\partial^2 \mathcal{LL}^{\text{MLE}}_n}{\partial \tilde{\bb}_i\partial \tilde{\bb}_j} =&  \tilde{P}_{i,n}\cdot\tilde{P}_{j,n}\cdot\boldsymbol{x}_n\cdot \boldsymbol{x}_n^T .
\end{align}
Consider the $k$-th feature in alternative $i$. Let of corresponding diagonal term in $\mathcal{I}_{\text{RF}}(\boldsymbol{\hat{\boldsymbol{\beta}}^{\text{RF}}})$ be $\mathcal{I}_{\text{RF}}(\boldsymbol{\hat{\boldsymbol{\beta}}^{\text{RF}}})_{(i,k),(i,k)} \in \mathbb{R}$, we have:
\begin{align}
    &\mathcal{I}_{\text{RF}}(\hat{\tilde{\boldsymbol{\beta}}}^{\text{RF}})_{(i,k),(i,k)} = \sum_{n\in\mathcal{N}}-\left(\frac{\partial^2 \mathcal{LL}^{\text{RF}}_n}{\partial (\tilde{\bb}_i)^2}\right)_{k,k} \notag \\
    &= \sum_{n\in\mathcal{N}}\tilde{P}_{i,n}^{\text{Norm}}\cdot(1-\tilde{P}_{i,n}^{\text{Norm}})\cdot\left(\left(\boldsymbol{x}_n^{(k)}\right)^2 + 2\rho_{n}\cdot \left(\boldsymbol{x}_n^{(k)}\right)\cdot\left(\nabla_{\tilde{\bb}_i}\norm{\tilde{\bb}_i}_q\right)_k
    + (\rho_{n})^2\cdot \left(\nabla_{\tilde{\bb}_i}\norm{\tilde{\bb}_i}_q\right)_k^2  \right) \notag \\
   & + \tilde{P}_{i,n}^{\text{Norm}}\cdot\rho_{n}\cdot\left(\nabla^2_{\tilde{\bb}_i}\norm{\tilde{\bb}_i}_q \right)_{k,k} .
\end{align}
Similarly, for the MLE estimator:
\begin{align}
    \mathcal{I}_{\text{MLE}}(\hat{\tilde{\boldsymbol{\beta}}}^{\text{MLE}})_{(i,k),(i,k)} = \sum_{n\in\mathcal{N}}-\left(\frac{\partial^2 \mathcal{LL}^{\text{MLE}}_n}{\partial (\tilde{\bb}_i)^2}\right)_{k,k} 
    = \sum_{n\in\mathcal{N}}\tilde{P}_{i,n}\cdot(1-\tilde{P}_{i,n})\cdot \left(\boldsymbol{x}_n^{(k)}\right)^2 .
\end{align}
Therefore, 
\begin{align}
    \text{Tr}(\mathcal{I}_{\text{RF}}(\hat{\tilde{\boldsymbol{\beta}}}^{\text{RF}})) = \sum_{i\in\mathcal{C}, k\in\mathcal{K}} \mathcal{I}_{\text{RF}}(\hat{\tilde{\boldsymbol{\beta}}}^{\text{RF}})_{(i,k),(i,k)} ,\label{eq_trace_1}\\ 
    \text{Tr}(\mathcal{I}_{\text{MLE}}(\hat{\tilde{\boldsymbol{\beta}}}^{\text{MLE}})) = \sum_{i\in\mathcal{C}, k\in\mathcal{K}} \mathcal{I}_{\text{MLE}}(\hat{\tilde{\boldsymbol{\beta}}}^{\text{MLE}})_{(i,k),(i,k)} .\label{eq_trace_2}
\end{align}
Comparing Eqs. \ref{eq_trace_1} and \ref{eq_trace_2}, to argue $ \text{Tr}(\mathcal{I}_{\text{RF}}(\hat{\tilde{\boldsymbol{\beta}}}^{\text{RF}})) \geq  \text{Tr}(\mathcal{I}_{\text{MLE}}(\hat{\tilde{\boldsymbol{\beta}}}^{\text{MLE}}))$, we need to satisfy two requirements:
\begin{itemize}
    \item \textbf{Requirement 1}: For the shared term $\left(\boldsymbol{x}_n^{(k)}\right)^2$, the coefficients are $\sum_{i\in\mathcal{C}}\tilde{P}_{i,n}^{\text{Norm}}\cdot(1-\tilde{P}_{i,n}^{\text{Norm}})$ and $\sum_{i\in\mathcal{C}}\tilde{P}_{i,n}\cdot(1-\tilde{P}_{i,n})$ for Eqs. \ref{eq_trace_1} and \ref{eq_trace_2}, respectively. We need to show $\sum_{i\in\mathcal{C}}\tilde{P}_{i,n}^{\text{Norm}}(\hat{\tilde{\boldsymbol{\beta}}}^{\text{RF}})\cdot(1-\tilde{P}_{i,n}^{\text{Norm}}(\hat{\tilde{\boldsymbol{\beta}}}^{\text{RF}})) \geq \sum_{i\in\mathcal{C}}\tilde{P}_{i,n}(\hat{\tilde{\boldsymbol{\beta}}}^{\text{MLE}})\cdot(1-\tilde{P}_{i,n}(\hat{\tilde{\boldsymbol{\beta}}}^{\text{MLE}}))$
    \item \textbf{Requirement 2}: The remaining parts in Eq. \ref{eq_trace_1} (excluding the shared term $\left(\boldsymbol{x}_n^{(k)}\right)^2$) are $\sum_{i\in\mathcal{C}, k\in\mathcal{K}} \sum_{n\in\mathcal{N}} 2\cdot \rho_{n}\cdot \left(\boldsymbol{x}_n^{(k)}\right)\cdot\left(\nabla_{\tilde{\bb}_i}\norm{\tilde{\bb}_i}_q\right)_k
    + (\rho_{n})^2\cdot \left(\nabla_{\tilde{\bb}_i}\norm{\tilde{\bb}_i}_q\right)_k^2$
    and $\sum_{i\in\mathcal{C}, k\in\mathcal{K}} \sum_{n\in\mathcal{N}}\tilde{P}_{i,n}^{\text{Norm}}\cdot\rho_{n}\cdot\left(\nabla^2_{\tilde{\bb}_i}\norm{\tilde{\bb}_i}_q \right)_{k,k}$. We need to show they are both non-negative. As $\ell_q$ norm is a convex function, its Hessian matrix is positive semi-definite. Therefore, $\left(\nabla^2_{\tilde{\bb}_i}\norm{\tilde{\bb}_i}_q \right)_{k,k} \geq 0$. So the second remaining part is non-negative. We only need to show the first part is non-negative.
\end{itemize}

Recall our preliminary for the proposition is $\rho_n$ is large enough. Its specific conditions are as follows:
\begin{itemize}
    \item \textbf{Condition 1}: $\rho_n$ is large enough such that $\sum_{i\in\mathcal{C}}\tilde{P}_{i,n}^{\text{Norm}}(\hat{\tilde{\boldsymbol{\beta}}}^{\text{RF}})\cdot(1-\tilde{P}_{i,n}^{\text{Norm}}(\hat{\tilde{\boldsymbol{\beta}}}^{\text{RF}})) \geq \sum_{i\in\mathcal{C}}\tilde{P}_{i,n}(\hat{\tilde{\boldsymbol{\beta}}}^{\text{MLE}})\cdot(1-\tilde{P}_{i,n}(\hat{\tilde{\boldsymbol{\beta}}}^{\text{MLE}}))$. We will explain later why this condition is related to the scale of $\rho_n$.
    \item \textbf{Condition 2}: $\rho_n$ is large enough such that 
    \begin{align}
        \rho_{n}  \geq 2\left|\frac{\boldsymbol{x}_n^{(k)}}{\left(\nabla_{\tilde{\bb}_i=\hat{\tilde{\boldsymbol{\beta}}}^{\text{RF}}_i}\norm{\tilde{\bb}_i}_q\right)_k} \right| .
    \end{align}
\end{itemize}
The explanation for Condition 1 is as follows. For function with form $\sum_i p_i(1-p_i)$ and $\sum_i p_i = 1$. The maximum value will be achieved when every $p_i$ is the same.
When $\rho_n$ is large enough, based on Proposition \ref{prop_inconsist}, $\hat{\tilde{\bb}}^{\text{RF}}$ will shrink toward 0, which will make $\tilde{P}_{i,n}^{\text{Norm}}$ closer to $\frac{1}{|\mathcal{C}|}$ (i.e., more similar to each other) than $\tilde{P}_{i,n}$. Therefore, Condition 1 can be achieved when $\rho_n$ is large enough. Based on condition 2, we can easily derive that:
\begin{align}\label{eq_rf_remain_part}
    2\rho_{n}\cdot \left(\nabla_{\tilde{\bb}_i=\hat{\tilde{\boldsymbol{\beta}}}^{\text{RF}}_i}\norm{\tilde{\bb}_i}_q\right)_k 
 + (\rho_{n})^2\cdot \left(\nabla_{\tilde{\bb}_i=\hat{\tilde{\boldsymbol{\beta}}}^{\text{RF}}_i}\norm{\tilde{\bb}_i}_q\right)_k^2 \geq 0 .
\end{align}
Eq. \ref{eq_rf_remain_part} implies the first remaining part listed in Requirement 2 is non-negative. Since Condition 1 directly implies Requirement 1, we have satisfied all requirements needed for claiming $ \text{Tr}(\mathcal{I}_{\text{RF}}(\hat{\tilde{\boldsymbol{\beta}}}^{\text{RF}})) \geq  \text{Tr}(\mathcal{I}_{\text{MLE}}(\hat{\tilde{\boldsymbol{\beta}}}^{\text{MLE}}))$


\end{proof}
Proposition \ref{prop_info_matrix} implies that, though the robust-feature MNL estimator is biased, its Fisher information matrix is larger (in terms of traces) than the original MLE estimator. Using the generalized Cramer-Rao bound \citep{cramer2016mathematical, rao1992information}, we have:
\begin{align}
    \text{Var}[\hat{\bb}^{\text{RF}}] \geq \text{Diag}\left(\mathcal{I}_{\text{RF}}(\boldsymbol{\hat{\boldsymbol{\beta}}^{\text{RF}}})^{-1}\cdot\left(\boldsymbol{I} + \boldsymbol{b}\boldsymbol{b}^T\right)\right),\quad \text{Var}[\hat{\bb}^{\text{MLE}}] \geq \text{Diag}\left(\mathcal{I}_{\text{RF}}(\boldsymbol{\hat{\boldsymbol{\beta}}^{\text{MLE}}})^{-1}\right) ,
\end{align} 
where $\boldsymbol{b}$ is the gradient of the bias term (usually non-tractable). Therefore, a larger trace of the Fisher information matrix usually implies lower estimated variance \citep{pukelsheim2006optimal}. However, the rigorous proof is difficult given the complex form of the Hessian matrix and the non-tractable bias term. 

\subsection{Statistical properties of the robust label estimators}
The analysis of robust label estimators is similar to robust feature estimators. We will show that the estimated parameter $\hat{\boldsymbol{\beta}}^{\text{RL}}$ depends on external hyper-parameter $\Gamma$, thus is inconsistent, and also show that it has a larger Fisher information matrix.

\begin{prop}\label{prop_inconsist_RL}
Let $\hat{\boldsymbol{\beta}}^{\text{RL}}$ be as the solution of Eq. \ref{eq_robust_label_mnl} (robust label MNL problem). $\hat{\boldsymbol{\beta}}^{\text{RL}}$ is an inconsistent estimate of the actual parameter. The value of $\hat{\boldsymbol{\beta}}^{\text{RL}}$ depends on external hyper-parameter $\Gamma$. A larger value of $\Gamma$ will shrink the scale of $\hat{\boldsymbol{\beta}}^{\text{RL}}$. 
\end{prop}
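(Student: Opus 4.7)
The plan is to mirror the approach used for Proposition 1, by first obtaining a closed-form expression for the inner minimization over $\boldsymbol{\Delta y}$ in Eq.~\ref{eq_original_rl} and then examining how the resulting penalty perturbs the nominal MLE objective. Since the inner problem is linear in $\boldsymbol{\Delta y}$ and (as already shown in the excerpt) can be taken over $\mathrm{Conv}(\mathcal{U}(\Gamma))$, its optimum is attained at an extreme point: at most $\Gamma$ individuals have their chosen label swapped, and each swap for sample $n$ from $I_n$ to $j\neq I_n$ changes the contribution to the log-likelihood by $\log P_{n,j}(\boldsymbol{\beta})-\log P_{n,I_n}(\boldsymbol{\beta})\leq 0$. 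The minimizer therefore assigns each selected sample to $j^*_n(\boldsymbol{\beta}):=\arg\min_{j\in\mathcal{C}_n\setminus\{I_n\}}P_{n,j}(\boldsymbol{\beta})$ whenever this yields a non-positive change, and picks the $\Gamma$ samples with the largest ``confidence gap'' $d_n(\boldsymbol{\beta}):=\max\{0,\,\log P_{n,I_n}(\boldsymbol{\beta})-\min_{j\in\mathcal{C}_n\setminus\{I_n\}}\log P_{n,j}(\boldsymbol{\beta})\}\geq 0$.

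With this characterization, Eq.~\ref{eq_original_rl} reduces to
\[
\max_{\boldsymbol{\beta}\in\mathcal{B}}\;\sum_{n\in\mathcal{N}}\log P_{n,I_n}(\boldsymbol{\beta})\;-\;\sum_{n\in\mathcal{N}_\Gamma(\boldsymbol{\beta})} d_n(\boldsymbol{\beta}),
\]
where $\mathcal{N}_\Gamma(\boldsymbol{\beta})\subseteq\mathcal{N}$ denotes the top-$\Gamma$ index set ordered by $d_n$. Compared with the nominal MLE, the robust-label objective carries an additional non-negative penalty that explicitly depends on $\Gamma$ and on $\boldsymbol{\beta}$ (through both the ordering of $\mathcal{N}_\Gamma$ and the magnitudes $d_n$). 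Since the MLE is the unique consistent objective under correct specification, this extra term shifts the stationary point away from the true parameter and forces the optimizer to depend on the hyper-parameter $\Gamma$, which establishes inconsistency as in Proposition~\ref{prop_inconsist}. For the shrinkage claim I would use the fact that $d_n(\boldsymbol{\beta})\to 0$ as $\|\boldsymbol{\beta}\|\to 0$, because the MNL probabilities flatten to the uniform vector $1/|\mathcal{C}_n|$ and the gap closes. Hence the penalty is minimized at the origin. Combined with the identification normalization described in Remark~\ref{remark_bnl_rho}, a growing $\Gamma$ increases the relative weight of the penalty in the objective and pulls $\hat{\boldsymbol{\beta}}^{\text{RL}}$ toward $\boldsymbol{0}$; in the extreme case $\Gamma=|\mathcal{N}|$ the penalty equals $\sum_n d_n(\boldsymbol{\beta})$ and the optimum becomes exactly $\boldsymbol{\beta}=\boldsymbol{0}$, paralleling Remark~\ref{remark_robust_label_gamma} when $\Gamma=0$ recovers the nominal MNL.

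The main obstacle is that the penalty $\sum_{n\in\mathcal{N}_\Gamma(\boldsymbol{\beta})}d_n(\boldsymbol{\beta})$ is only piecewise smooth: both the top-$\Gamma$ set $\mathcal{N}_\Gamma(\boldsymbol{\beta})$ and the worst-alternative index $j^*_n(\boldsymbol{\beta})$ change discretely with $\boldsymbol{\beta}$, so a naive gradient comparison cannot be applied globally. To close the monotone-shrinkage argument rigorously, I would either invoke an envelope/subdifferential argument along each piecewise-smooth region, or exploit the dual formulation in Eqs.~\ref{eq_const1}--\ref{eq_const2}, where $\Gamma$ enters linearly as $\Gamma\cdot C$ with $C\leq 0$, so that increasing $\Gamma$ tightens the effective regularization on $\boldsymbol{\beta}$ and the KKT conditions can be used to trace how $\hat{\boldsymbol{\beta}}^{\text{RL}}(\Gamma)$ evolves with $\Gamma$. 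The inconsistency conclusion itself requires no such smoothness and follows directly from the $\boldsymbol{\beta}$-dependence of the penalty, exactly as in the robust-feature case.
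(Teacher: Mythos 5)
Your proposal follows essentially the same route as the paper's proof: you solve the inner minimization at an extreme point of the convex hull (swapping the top-$\Gamma$ samples to the least-likely alternative), rewrite the robust objective as the nominal log-likelihood plus a non-vanishing, $\boldsymbol{\beta}$-dependent penalty that is maximized at $\boldsymbol{\beta}=\boldsymbol{0}$, and conclude inconsistency and shrinkage from there, which is exactly the paper's decomposition into $\sum_n \log P_{n,I_n}(\boldsymbol{\beta}) + R(\boldsymbol{\beta};\Gamma)$. Your added remarks on the piecewise smoothness of the top-$\Gamma$ selection and the dual route via $\Gamma\cdot C$ go slightly beyond the paper's level of rigor, and your claim that $\Gamma=|\mathcal{N}|$ forces $\hat{\boldsymbol{\beta}}^{\text{RL}}=\boldsymbol{0}$ is defensible but sits in mild tension with the paper's own remark that $\boldsymbol{\beta}$ need not vanish as $\Gamma$ grows; neither point changes the substance of the argument.
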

\begin{proof}
Let us decompose the original robust-label MNL problem (Eq. \ref{eq_original_rl}) as:
\begin{align}\label{eq_decomp_rl}
    \max_{\bb\in\mathcal{B}} \left(\sum_{n\in\mathcal{N}}\sum_{i\in\mathcal{C}_n} y_{n,i} \cdot \log(P_{n,i}(\boldsymbol{\beta})) + \min_{\boldsymbol{\Delta y}\in\mathcal{U}(\Gamma) }\left(\sum_{n\in\mathcal{N}}  \sum_{i\in\mathcal{C}_n}\Delta y_{n,i} \cdot \log(P_{n,i}(\boldsymbol{\beta}))\right) \right) .
\end{align}
From the definition of $\mathcal{U}(\Gamma)$, we need either have $(\Delta y_{n,j} = 0, \forall j\in\mathcal{C}_n)$ or $(\Delta y_{n,I_n} = -1$, one of the $\Delta y_{n,j} = 1, j\neq I_n)$. To minimize $\sum_{i\in\mathcal{C}_n}\Delta y_{n,i} \cdot \log(P_{n,i}(\boldsymbol{\beta}))$, we need to consider the relationship of $P_{n,j}$ for $j\in\mathcal{C}_n$. For this single sample $n$, one could easily derive the optimal selection of $\Delta y_{n,j}$ is:
\begin{subequations}
\begin{align}
   & \Delta y_{n,J_n^*} = 1,\; \Delta y_{n,I_n} = -1\quad \text{if $P_{n,I_n}(\boldsymbol{\beta}) > P_{n,j^*}(\boldsymbol{\beta})$, where $J_n^*=\argmin_{j}\{P_{n,j}: j\in\mathcal{C}_n\setminus\{I_n\}  \}$} \label{eq_opt_cond1}\\
   & \Delta y_{n,j} = 0, \forall j\in\mathcal{C}_n \quad \text{Otherwise} \label{eq_opt_cond2}
\end{align}
\end{subequations}
For $|\mathcal{N}|$ samples, the optimal solution will be selecting top $\Gamma$ samples with the lowest $ \left(\log (P_{n,J_n^*})- \log (P_{n,I_n})\right)$ to assign $\Delta y_{n,I_n} = -1, \Delta y_{n,J_n^*}=1$, and others to 0. Define the set of these selected top $\Gamma$ samples as $\mathcal{N}^{\text{Top}}(\bb;\Gamma)$. The optimal objective function of the inner minimization problem will be 
\begin{align}
   R(\bb;\Gamma) = \min_{\boldsymbol{\Delta y}\in\mathcal{U}(\Gamma)} \sum_{n\in\mathcal{N}}\sum_{i\in\mathcal{C}_n}\Delta y_{n,i} \cdot \log(P_{n,i}(\boldsymbol{\beta})) 
    = \sum_{n\in\mathcal{N}^{\text{Top}}(\bb;\Gamma)} \log (P_{n,J_n^*})- \log (P_{n,I_n}) .
\end{align}
Then Eq. \ref{eq_decomp_rl} can be rewritten as:
\begin{align}
    \max_{\bb\in\mathcal{B}} \left(\sum_{n\in\mathcal{N}}\sum_{i\in\mathcal{C}_n} y_{n,i} \cdot \log(P_{n,i}(\boldsymbol{\beta}))\right) + R(\bb;\Gamma) .\label{eq_robust_label_MNL_gammainf}
\end{align}
Therefore, $R(\bb;\Gamma)$ can be treated as a regularization term. It will penalize extreme value of $\bb$ because that could make $P_{n,J_n^*} \rightarrow 0$ and $R(\bb;\Gamma) \rightarrow -\infty$. The maximal value of $R(\bb;\Gamma)$ is achieved at $\bb = 0$ because, in this case, $P_{n,i} = P_{n,j}, \forall i,j \in\mathcal{C}_n$ and $R(\bb;\Gamma)=0$. Therefore, this regularization term tends to shrink $\bb$ toward 0. Thus, the robust-label estimator is biased. More importantly, the bias will not diminish with an increase in samples. Reversely, it will increase because more samples indicate the selection of top $\Gamma$ samples could have a lower value of $\log (P_{n,J_n^*})- \log (P_{n,I_n})$. Therefore, the robust-label MNL estimator is inconsistent. It is worth noting that when $\Gamma \rightarrow \infty$, $R(\bb;\Gamma)$ will not go to infinity (thus $\bb$ will not be 0), because the physical meaning of $\Gamma$ (i.e., the maximum number of label errors) implies it is only effective when $\Gamma \leq |\mathcal{N}|$. 
\end{proof}

\begin{prop}\label{prop_info_matrix_rl}
Let $\mathcal{I}_{\text{RL}}(\cdot)$ and $\mathcal{I}_{\text{MLE}}(\cdot)$ be the Fisher information matrix of the robust-label MNL estimator and the original MLE estimator, respectively. We have
\begin{align}
    \text{Tr}\left(\mathcal{I}_{\text{RL}}(\boldsymbol{\hat{\boldsymbol{\beta}}^{\text{RL}}})\right) \geq     \text{Tr}\left(\mathcal{I}_{\text{MLE}}(\boldsymbol{\hat{\boldsymbol{\beta}}^{\text{MLE}}})\right) .
\end{align}
where $\boldsymbol{\hat{\boldsymbol{\beta}}^{\text{RL}}}$ and $\boldsymbol{\hat{\boldsymbol{\beta}}^{\text{MLE}}}$ are corresponding estimated parameters. $\text{Tr}(\cdot)$ is the trace of a matrix.
\end{prop}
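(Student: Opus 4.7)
The plan is to leverage the decomposition of the robust-label log-likelihood established inside the proof of Proposition \ref{prop_inconsist_RL},
\begin{align}
\mathcal{LL}^{\text{RL}}(\boldsymbol{\beta};\Gamma) \;=\; \mathcal{LL}^{\text{MLE}}(\boldsymbol{\beta}) \;+\; R(\boldsymbol{\beta};\Gamma),
\end{align}
where $R(\boldsymbol{\beta};\Gamma)=\sum_{n\in\mathcal{N}^{\text{Top}}(\boldsymbol{\beta};\Gamma)}[\log P_{n,J_n^*}-\log P_{n,I_n}]$. On any open region of parameter space where the selected index set $\mathcal{N}^{\text{Top}}$ and the worst-case alternatives $J_n^*$ remain locally constant, each summand equals $(\boldsymbol{\beta}_{J_n^*}-\boldsymbol{\beta}_{I_n})^T\boldsymbol{x}_n$ after using $\log P_{n,i}=\boldsymbol{\beta}_i^T\boldsymbol{x}_n-\log\sum_j\exp(\boldsymbol{\beta}_j^T\boldsymbol{x}_n)$. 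Hence $R$ is locally linear in $\boldsymbol{\beta}$ and its Hessian vanishes almost everywhere, giving $\nabla^2\mathcal{LL}^{\text{RL}}=\nabla^2\mathcal{LL}^{\text{MLE}}$ and therefore $\mathcal{I}_{\text{RL}}(\boldsymbol{\beta})=\mathcal{I}_{\text{MLE}}(\boldsymbol{\beta})$ pointwise in the interior of such regions.

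Given this reduction, the task collapses to comparing $\text{Tr}(\mathcal{I}_{\text{MLE}}(\hat{\boldsymbol{\beta}}^{\text{RL}}))$ with $\text{Tr}(\mathcal{I}_{\text{MLE}}(\hat{\boldsymbol{\beta}}^{\text{MLE}}))$. I would re-use the orthogonal transformation $\tilde{\boldsymbol{\beta}}_j=\boldsymbol{\beta}_j-\boldsymbol{\beta}_{I_n}$ from Proposition \ref{prop_info_matrix}, which preserves the trace, and recall the explicit diagonal expression derived there,
\begin{align}
\text{Tr}\left(\mathcal{I}_{\text{MLE}}(\tilde{\boldsymbol{\beta}})\right) \;=\; \sum_{n\in\mathcal{N}}\sum_{k\in\mathcal{K}}\bigl(x_n^{(k)}\bigr)^2 \sum_{i\in\mathcal{C}_n}\tilde{P}_{i,n}(\tilde{\boldsymbol{\beta}})\bigl(1-\tilde{P}_{i,n}(\tilde{\boldsymbol{\beta}})\bigr).
\end{align}
Thus the trace inequality reduces to the per-sample claim $\sum_i\tilde{P}_{i,n}(\hat{\tilde{\boldsymbol{\beta}}}^{\text{RL}})(1-\tilde{P}_{i,n}(\hat{\tilde{\boldsymbol{\beta}}}^{\text{RL}}))\geq \sum_i\tilde{P}_{i,n}(\hat{\tilde{\boldsymbol{\beta}}}^{\text{MLE}})(1-\tilde{P}_{i,n}(\hat{\tilde{\boldsymbol{\beta}}}^{\text{MLE}}))$, which is exactly the same requirement (Requirement 1) that appeared in the proof of Proposition \ref{prop_info_matrix}.

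To close this, I would invoke the shrinkage result from Proposition \ref{prop_inconsist_RL}: the regularization term $R(\boldsymbol{\beta};\Gamma)$ attains its maximum at $\boldsymbol{\beta}=0$ and penalizes extreme coefficients, so $\hat{\boldsymbol{\beta}}^{\text{RL}}$ is pulled toward zero compared with $\hat{\boldsymbol{\beta}}^{\text{MLE}}$. Softmax probabilities thereby move toward the uniform distribution $(1/|\mathcal{C}_n|,\dots,1/|\mathcal{C}_n|)$. Because $f(p_1,\dots,p_{|\mathcal{C}_n|})=\sum_i p_i(1-p_i)$ is symmetric and concave on the probability simplex and is therefore Schur-concave with a unique maximizer at the uniform vector, any majorization statement saying that $(\tilde{P}_{i,n}(\hat{\tilde{\boldsymbol{\beta}}}^{\text{RL}}))_i$ is closer to uniform than $(\tilde{P}_{i,n}(\hat{\tilde{\boldsymbol{\beta}}}^{\text{MLE}}))_i$ delivers the required per-sample inequality, and summing over $n$ and $k$ completes the proof.

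The main obstacle is precisely the last step: converting the informal ``shrinkage of $\hat{\boldsymbol{\beta}}^{\text{RL}}$'' into a rigorous majorization of the softmax output. Since the map from $\tilde{\boldsymbol{\beta}}$ to the probability vector is nonlinear, a coordinate-wise shrinkage of parameters does not automatically yield componentwise movement toward $1/|\mathcal{C}_n|$. A clean remedy, mirroring Condition 1 in Proposition \ref{prop_info_matrix}, is to assume $\Gamma$ is large enough so that the regularizer $R$ dominates and the induced probabilities are demonstrably closer to uniform than their MLE counterparts; otherwise one has to rely on a more delicate directional argument along the line segment from $\hat{\boldsymbol{\beta}}^{\text{MLE}}$ to $\hat{\boldsymbol{\beta}}^{\text{RL}}$. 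A secondary, minor issue is that $R$ is only piecewise linear, so the trace identity $\mathcal{I}_{\text{RL}}=\mathcal{I}_{\text{MLE}}$ holds outside a measure-zero breakpoint set; this can be handled by restricting attention to generic optima or using Clarke subdifferentials.
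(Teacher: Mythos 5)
Your starting point coincides with the paper's: both decompose the robust-label objective as $\mathcal{LL}^{\text{MLE}}(\bb)+R(\bb;\Gamma)$ with $R$ the worst-case relabeling penalty. From there you diverge, and your analysis of $R$ is sharper than the paper's. The paper argues only that $R$ is concave (a pointwise minimum of concave functions), so that $-\nabla^2 R$ is positive semidefinite and contributes a nonnegative amount to the trace. Your explicit computation is correct: because $\sum_{i}\Delta y_{n,i}=0$, the log-sum-exp terms cancel and every candidate value of $R$ is \emph{linear} in $\bb$, so $R$ is concave piecewise linear and $\nabla^2 R=0$ wherever it exists. This shows the paper's correction term is generically zero, and hence that the entire content of the proposition is the comparison of $\mathrm{Tr}\big(\mathcal{I}_{\text{MLE}}(\cdot)\big)$ evaluated at the two \emph{different} points $\hat{\bb}^{\text{RL}}$ and $\hat{\bb}^{\text{MLE}}$. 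The paper's displayed identity sidesteps exactly this by evaluating the MLE Hessian at $\hat{\bb}^{\text{MLE}}$ while evaluating the other two terms at $\hat{\bb}^{\text{RL}}$ inside a single equation, which tacitly assumes the MLE-part Hessian is unchanged between the two optima; that assumption is not justified there. You have therefore located where the real difficulty lies.

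The genuine gap is the one you flag yourself: passing from ``$\hat{\bb}^{\text{RL}}$ is shrunk toward zero'' to ``for every sample $n$, the fitted probability vector at $\hat{\bb}^{\text{RL}}$ is closer to uniform than at $\hat{\bb}^{\text{MLE}}$,'' which is what the per-sample inequality on $\sum_i \tilde P_{i,n}(1-\tilde P_{i,n})$ requires. Shrinkage of the parameter vector in norm does not imply sample-wise majorization of the softmax outputs — the robust optimum can change the \emph{direction} of $\bb$ as well as its scale, and an individual sample whose MLE probabilities are already near uniform may move away from uniform — so Schur-concavity alone does not close the argument. Your proposed repair (a largeness condition on $\Gamma$ mirroring Condition 1 in the proof of Proposition \ref{prop_info_matrix}) is the natural one, but note that the proposition as stated, and the paper's remark immediately after it, explicitly claim that no such condition on the regularization parameter is needed; so either the statement must be weakened to match your assumption, or a genuinely different argument must be supplied for the two-point comparison. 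The measure-zero issue at the breakpoints of $R$ is, as you say, minor and handled by restricting to generic optima.
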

\begin{proof}
Looking at the rewritten form of the robust-label MNL problem in Eq. \ref{eq_robust_label_MNL_gammainf}. We have:
\begin{align}
    \mathcal{I}_{\text{RL}}(\boldsymbol{\hat{\boldsymbol{\beta}}^{\text{RL}}}) = \mathcal{I}_{\text{MLE}}(\boldsymbol{\hat{\boldsymbol{\beta}}^{\text{MLE}}}) - \nabla^2_{\bb={\hat{\boldsymbol{\beta}}^{\text{RL}}}} R(\bb;\Gamma) .
\end{align}
Notice that $\sum_{n\in\mathcal{N}}\sum_{i\in\mathcal{C}_n}\Delta y_{n,i} \cdot \log(P_{n,i}(\boldsymbol{\beta}))$ is jointly concave for both $\boldsymbol{\Delta y}$ and $\bb$. The minimization operator over $\text{Conv}\left(\mathcal{U}(\Gamma)\right)$ preserve concavity. Therefore, $R(\bb;\Gamma)$ is concave. And $- \nabla^2_{\bb} R(\bb;\Gamma) $ is positive semi-definite with $\text{Tr}(- \nabla^2_{\bb={\hat{\boldsymbol{\beta}}^{\text{RL}}}} R(\bb;\Gamma)) \geq 0$. Therefore,  $\text{Tr}\left(\mathcal{I}_{\text{RL}}(\boldsymbol{\hat{\boldsymbol{\beta}}^{\text{RL}}})\right) \geq     \text{Tr}\left(\mathcal{I}_{\text{MLE}}(\boldsymbol{\hat{\boldsymbol{\beta}}^{\text{MLE}}})\right)$

\end{proof}

It is worth noting that, compared to the proof of the robust-feature MNL model (Proposition \ref{prop_info_matrix}), there is no requirement for regularization parameters. 

\subsection{Out-of-sample prediction performance analysis}
\label{sec_out_sample_pred}

We have shown that both robust-feature and label estimators tend to shrink the scale of the estimated $\bb$, and may have lower variance. In this section, we show that when testing data have uncertainties, the introduced errors are larger for a larger scale of $\bb$. 

Consider a probabilistic classification problem for the bias-variance decomposition. Define $\sigma(\boldsymbol{x}|\bb)_i := \frac{\exp\left({\bb}_i^T \boldsymbol{x} \right)}{\sum_{j\in\mathcal{C}}\exp\left({\bb}_j^T \boldsymbol{x} \right)}$. Given a testing sample $\boldsymbol{x}$, its true probability of choosing alternative $i$ is $P_i$, where $P_i(\boldsymbol{x}) = \sigma(\boldsymbol{x}|{\bb}^{\text{True}})_i + \epsilon_i$. $\epsilon_i$ are random errors with zero mean. For a given estimated parameter $\hat{\bb}$, define the estimated probabilities as $\hat{P}_i(\boldsymbol{x}|\hat{\bb}) := \sigma(\boldsymbol{x}|\hat{\bb})_i$. Then one could quantify the estimation error of the probabilistic classification problem as $\mathbb{E}\left[\sum_{i\in\mathcal{C}}|P_i(\boldsymbol{x}) - \hat{P}_i(\boldsymbol{x}\mid\hat{\bb})|\right] = \mathbb{E}\left[\norm{\boldsymbol{P}(\boldsymbol{x}) -\hat{\boldsymbol{P}}(\boldsymbol{x}\mid\hat{\bb})}_1 \right]$, where $\boldsymbol{P}:=(P_i)_{i\in\mathcal{C}}$ and $\hat{\boldsymbol{P}}:=(\hat{P}_i)_{i\in\mathcal{C}}$.

\begin{prop}\label{prop_error_bound}
    Given a testing sample $\boldsymbol{x}$, assume we can only observe the contaminated data $\tilde{\boldsymbol{x}} = \boldsymbol{x} + \Delta \boldsymbol{x}$, where $\Delta \boldsymbol{x}$ is a random variable. Then, the prediction error for a given estimated $\hat{\bb}$ satisfies:
    \begin{align}
        \mathbb{E}\left[\norm{\boldsymbol{P}(\boldsymbol{x}) -\hat{\boldsymbol{P}}(\tilde{\boldsymbol{x}}\mid\hat{\bb})}_1 \right] \leq \underbrace{ \mathbb{E}\left[\norm{\boldsymbol{P}(\boldsymbol{x}) -\hat{\boldsymbol{P}}({\boldsymbol{x}}\mid\hat{\bb})}_1 \right]}_{\text{Errors without data perturbations}} +L\cdot \mathbb{E}\left[\norm{\boldsymbol{\Delta x}}_2\right] \cdot \hat{\norm{\bb}}_2 ,
    \end{align}
where $L$ is the Lipschitz constant of the softmax function under $\ell_1$ norm.
\end{prop}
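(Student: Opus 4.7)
The plan is to bound the left-hand side by splitting the error into an ``estimation-at-clean-input'' piece and a ``sensitivity-to-perturbation'' piece, then control the sensitivity piece via Lipschitz continuity of the softmax and Cauchy--Schwarz. First I would apply the triangle inequality inside the expectation:
\begin{align}
\norm{\boldsymbol{P}(\boldsymbol{x}) -\hat{\boldsymbol{P}}(\tilde{\boldsymbol{x}}\mid\hat{\bb})}_1
\;\leq\; \norm{\boldsymbol{P}(\boldsymbol{x}) -\hat{\boldsymbol{P}}({\boldsymbol{x}}\mid\hat{\bb})}_1
\;+\; \norm{\hat{\boldsymbol{P}}({\boldsymbol{x}}\mid\hat{\bb}) -\hat{\boldsymbol{P}}(\tilde{\boldsymbol{x}}\mid\hat{\bb})}_1.
\end{align}
The first term on the right is exactly the ``errors without data perturbations'' term in the statement, so after taking expectations it reproduces the first summand verbatim. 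What remains is to bound the second summand, which measures how much the predicted probability vector moves when the input is perturbed by $\boldsymbol{\Delta x}$.

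Next I would use that, for a fixed parameter $\hat{\bb}$, the map $\boldsymbol{x}\mapsto \hat{\boldsymbol{P}}(\boldsymbol{x}\mid\hat{\bb})$ is the composition of the linear map $\boldsymbol{x}\mapsto \boldsymbol{z}(\boldsymbol{x}):=(\hat{\bb}_j^T\boldsymbol{x})_{j\in\mathcal{C}}$ with the softmax. Writing $\boldsymbol{z}(\tilde{\boldsymbol{x}})-\boldsymbol{z}(\boldsymbol{x}) = (\hat{\bb}_j^T\boldsymbol{\Delta x})_{j\in\mathcal{C}}$ and invoking the Lipschitz constant $L$ of the softmax (from $\ell_2$ on logits to $\ell_1$ on probabilities), we get
\begin{align}
\norm{\hat{\boldsymbol{P}}({\boldsymbol{x}}\mid\hat{\bb}) -\hat{\boldsymbol{P}}(\tilde{\boldsymbol{x}}\mid\hat{\bb})}_1
\;\leq\; L\cdot\norm{\boldsymbol{z}(\tilde{\boldsymbol{x}})-\boldsymbol{z}(\boldsymbol{x})}_2.
\end{align}
A one-line Cauchy--Schwarz calculation then yields $\sum_{j\in\mathcal{C}}(\hat{\bb}_j^T\boldsymbol{\Delta x})^2 \leq \big(\sum_{j\in\mathcal{C}}\norm{\hat{\bb}_j}_2^2\big)\cdot\norm{\boldsymbol{\Delta x}}_2^2 = \norm{\hat{\bb}}_2^2\cdot\norm{\boldsymbol{\Delta x}}_2^2$, giving $\norm{\boldsymbol{z}(\tilde{\boldsymbol{x}})-\boldsymbol{z}(\boldsymbol{x})}_2\leq \norm{\hat{\bb}}_2\cdot\norm{\boldsymbol{\Delta x}}_2$.

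Finally I would take expectations over $\boldsymbol{\Delta x}$ (and the randomness in $\boldsymbol{P}$ through $\epsilon_i$), pull the deterministic factor $\norm{\hat{\bb}}_2$ outside, and combine the two bounds to obtain exactly the inequality in the statement. The main technical point, rather than an obstacle, is fixing the norm-pairing conventions: the Lipschitz constant $L$ must be the one from $\ell_2$ logits to $\ell_1$ probabilities for the Cauchy--Schwarz step to land on $\norm{\hat{\bb}}_2\cdot\norm{\boldsymbol{\Delta x}}_2$. Once that convention is declared up front (matching the statement of the proposition), every subsequent step is routine triangle inequality, Lipschitz, and Cauchy--Schwarz, with no need to exploit any structural property of the estimator $\hat{\bb}$ itself---so the bound holds uniformly for any parameter vector, which is exactly why it can later be specialized to show that a smaller $\norm{\hat{\bb}}_2$ (as produced by the robust formulations) reduces the worst-case prediction error.
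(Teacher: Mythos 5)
Your proof is correct and follows essentially the same route as the paper's: a triangle inequality to split off the clean-input error, Lipschitz continuity of the softmax to control the perturbation term, and Cauchy--Schwarz to extract $\norm{\hat{\bb}}_2\cdot\norm{\boldsymbol{\Delta x}}_2$. The one substantive difference is your explicit choice of the $\ell_2$-logits-to-$\ell_1$-probabilities Lipschitz pairing, under which Cauchy--Schwarz lands exactly on $\norm{\hat{\bb}}_2\cdot\norm{\boldsymbol{\Delta x}}_2$; the paper instead bounds the $\ell_1$ norm of the logit difference, for which the analogous step gives $\sum_{i\in\mathcal{C}}\norm{\hat{\bb}_i}_2\cdot\norm{\boldsymbol{\Delta x}}_2$, a quantity that is in general at least (not at most) $\norm{\hat{\bb}}_2\cdot\norm{\boldsymbol{\Delta x}}_2$ --- so your convention, declared up front, is the one that makes the stated bound go through cleanly.
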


\begin{proof}

    \begin{align}
        \mathbb{E}\left[\norm{\boldsymbol{P}(\boldsymbol{x}) -\hat{\boldsymbol{P}}(\tilde{\boldsymbol{x}}\mid\hat{\bb})}_1 \right] 
        &\leq \mathbb{E}\left[\norm{\boldsymbol{P}(\boldsymbol{x}) -\hat{\boldsymbol{P}}({\boldsymbol{x}}\mid\hat{\bb})}_1 \right] + \mathbb{E}\left[\norm{\hat{\boldsymbol{P}}({\boldsymbol{x}}\mid\hat{\bb}) -\hat{\boldsymbol{P}}(\tilde{\boldsymbol{x}}\mid\hat{\bb})}_1 \right] \notag\\
        & =  \mathbb{E}\left[\norm{\boldsymbol{P}(\boldsymbol{x}) -\hat{\boldsymbol{P}}({\boldsymbol{x}}\mid\hat{\bb})}_1 \right] + \mathbb{E}\left[\norm{\sigma(\boldsymbol{x}|\hat{\bb}) -\sigma(\boldsymbol{x}+\boldsymbol{\Delta x}|\hat{\bb})}_1 \right] \notag \\ 
        & \leq  \mathbb{E}\left[\norm{\boldsymbol{P}(\boldsymbol{x}) -\hat{\boldsymbol{P}}({\boldsymbol{x}}\mid\hat{\bb})}_1 \right] +  L\cdot \mathbb{E}\left[\norm{(\hat{\bb}_i^T \boldsymbol{\Delta x})_{i\in\mathcal{C}}}_1\right] \notag\\
        & \leq \mathbb{E}\left[\norm{\boldsymbol{P}(\boldsymbol{x}) -\hat{\boldsymbol{P}}({\boldsymbol{x}}\mid\hat{\bb})}_1 \right] +  L\cdot \mathbb{E}\left[\norm{\boldsymbol{\Delta x}}_2\right] \cdot \hat{\norm{\bb}}_2 ,
    \end{align}
where the first inequality is from the triangle inequality. The second inequality is from the Lipschitz continuity of the softmax function under the $\ell_1$ norm. $L$ is the corresponding constant. The last inequality is from the Cauchy-Schwarz inequality.

\end{proof}

From Proposition \ref{prop_error_bound}, we see that the upper bound of prediction errors when data has uncertainties has two parts: one is the error without data perturbations and another one is proportional to the scale of estimated $\bb$. Since robust estimators tend to shrink the scale of $\bb$, they are preferred for cases with data perturbations. 

Note that the error without data perturbations follows the typical bias-variances trade-off \citep{kohavi1996bias, vapnik1999nature}. The robust estimators have higher bias and usually smaller variances. It could potentially improve the model's generalizability with proper selection of $\rho_n$ like regularization in machine learning. Details on the bias-variances trade-off analysis can be found in \ref{append_bia_var}

\section{Numerical Experiments}\label{sec_case_study}

\subsection{Experiments design}

\subsubsection{Binary case study}\label{sec_binary_exp_setup}
The robust BNL is evaluated on the Singapore first- and last-mile travel mode choice data set \citep{mo2018impact}. The data set is part of Singapore's Household Interview Travel Survey (HITS) in 2012. The survey collects information on travel characteristics as well as individual sociodemographics. The first and last-mile trips are extracted from the whole trip train in the HITS. Besides travel characteristics, the built environment information is also included as they highly impact the mode choices. Data collection details can be found in \citet{mo2018impact}. The alternative travel modes for the first/last mile trips are walk and bus.

The whole data set contains a total of more than 24,000 observations. In the case study, we randomly select 1,000 samples as the training data set and another 1,000 samples as the testing set. Denote the raw training and testing data set as $\mathcal{D}^{\text{Train}}$ and $\mathcal{D}^{\text{Test}}$, respectively. In order to simulate data uncertainties, we generate the synthetic testing data set with errors as the following:
\begin{itemize}
    \item Step 1: Train a conventional BNL model in $\mathcal{D}^{\text{Test}}$ and assume that the obtained parameters $\boldsymbol{\beta}^{\text{Test}}$ are the  ``true'' behavior mechanism that individuals will follow.
    \item Step 2: For each individual $n$ in the testing data set, generate the synthetic choice $\hat{\boldsymbol{y}}_n$ based on $\boldsymbol{x}_n$ and $\boldsymbol{\beta}^{\text{Test}}$ (i.e., calculate the choice probabilities and randomly select one alternative based on the probabilities). 
    \item Step 3: Add artificial errors to the generated data to get the final synthetic testing set with errors: $\Tilde{\boldsymbol{y}}_n = \hat{\boldsymbol{y}}_n + \boldsymbol{\Delta{y}}$ and $\Tilde{\boldsymbol{x}}_n = {\boldsymbol{x}}_n + \boldsymbol{\Delta{x}}$. Let the synthetic testing data set with errors be $\tilde{\mathcal{D}}^{\text{Test}}$.
\end{itemize}
Specifically, the random errors $\boldsymbol{\Delta{x}}$ and $\boldsymbol{\Delta{y}}$ are generated as follows. $\boldsymbol{\Delta{x}}$ are drawn from a uniform distribution $\mathtt{U}[-0.3\Bar{\boldsymbol{x}}, 0.3\Bar{\boldsymbol{x}}]$, where $\Bar{\boldsymbol{x}} = {\sum_{n\in\mathcal{N}} \boldsymbol{x}_n} / {|\mathcal{N}|}$ is the average value of features (i.e., we perturb the features by maximally 30\%). $\boldsymbol{\Delta{y}}$ is a perturbation to the labels such that, with 10\% probability, the label $\boldsymbol{y}_n$ is replaced by a randomly-selected alternative in $\mathcal{C}_n$.

``Walk'' is set as the base mode. All models will be trained or estimated in the training data set $\mathcal{D}^{\text{Train}}$, and tested in the synthetic testing data $\tilde{\mathcal{D}}^{\text{Test}}$ to evaluate their performances. The data generation process is replicated 30 times and all models are trained and evaluated in those 30 replications to reduce the impact of randomness.

\subsubsection{Multinomial case study}
The robust MNL is evaluated on the Swissmetro stated preference survey data set \citep{bierlaire2001acceptance}. The survey aims to analyze the impact of travel modal innovation in transportation, represented by the Swissmetro, a revolutionary maglev underground system, against the usual transport modes represented by car and train. The data contains 1,004 individuals with 9,036 responses. Users are asked to select from three travel modes (train, car, and Swissmetro) given the corresponding travel attributes. ``Train'' is set as the base mode. The training and testing data set are generated in the same way as the binary case study with 1,000 randomly selected samples for both training and testing data sets. 

\subsubsection{Definition of metrics}
Two major metrics are reported in the case study: log-likelihood (LL) and accuracy. Given a data set $\mathcal{D}=\{(\boldsymbol{x}_n, \boldsymbol{y}_n): \forall n \in\mathcal{N}_{\mathcal{D}}\}$, $\mathcal{N}_{\mathcal{D}}$ is the set of sample index of the data set $\mathcal{D}$. We have:
\begin{align}
    \text{Accuracy}(\mathcal{D}) & = \frac{\mathbbm{1}\big(\argmax_{i\in\mathcal{C}_n} \{\sigma({\boldsymbol{x}_n|\hat{\bb}})_i\}=\argmax_{i\in\mathcal{C}_n} \{{y}_{n,i}\} \big)}{|\mathcal{N}_{\mathcal{D}}|} \\
    \text{Log-likelihood}(\mathcal{D}) & = \sum_{n\in\mathcal{N}} \sum_{i\in\mathcal{C}_n} y_{n,i}\cdot \log\left(\sigma({\boldsymbol{x}_n|\hat{\bb}})_i\right)
\end{align}
where $\mathbbm{1}(\cdot)$ is the indicator function (equal to 1 if true, otherwise 0). We will evaluate the accuracy and log-likelihood for both $\mathcal{D}^{\text{Train}}$ and $\tilde{\mathcal{D}}^{\text{Test}}$. Note that the robust formulations are only used to estimate parameters. Once the parameters are obtained. Log-likelihood is evaluated on its original definition. 

\subsection{Experiment results}
\subsubsection{Binary case study}
In the case study, we set $p = 2$, thus $q$ also equals 2. The final robustness term for robust BNL model (Eq. \ref{eq_robust_bnl}) is $\rho_{n} \norm{\bb_{I_n} - \bb_{J_n}}_2$. We also assume all individuals have the same uncertainty budget $\rho_n$ (i.e., $\rho_n = \rho_m$ for all $n,m\in\mathcal{N}$). Table \ref{tab_res_bnl} shows the training and testing accuracy and log-likelihood (LL) with respect to different values of $\rho_n$ and $\Gamma$. 

\begin{table}[H]
\centering
\caption{Results for robust BNL models}\label{tab_res_bnl}
\resizebox{\linewidth}{!}
{
\begin{tabular}{@{}llcccc@{}}
\toprule
\textbf{Model}  & \textbf{Parameters}   & \textbf{Training accuracy} & \textbf{Training LL} & \textbf{Testing accuracy} & \textbf{Testing LL} \\ \midrule
Binary Logit & -  & 0.957 ($\pm$0.010) & -152.6 ($\pm$22.6) & 0.879 ($\pm$0.014) & -358.0 ($\pm$77.1) \\ \midrule
\multirow{5}{*}{Robust-feature}& $\rho_n = 0.001$& 0.957 ($\pm$0.010) & -152.6 ($\pm$22.6) & 0.880 ($\pm$0.013) & -353.8 ($\pm$75.5) \\
& $\rho_n = 0.01$&  0.957 ($\pm$0.010) & -153.1 ($\pm$22.5) & 0.884 ($\pm$0.014) & -337.5 ($\pm$67.6) \\
& $\rho_n = 0.1$&  0.951 ($\pm$0.009) & -163.3 ($\pm$22.0) & \cellcolor{gray!25}0.890 ($\pm$0.012) & -289.3 ($\pm$33.0) \\
& $\rho_n = 0.2$&       0.940 ($\pm$0.008) & -188.8 ($\pm$21.8) & 0.888 ($\pm$0.011) & \cellcolor{gray!25}-273.2 ($\pm$21.9) \\ 
& $\rho_n = 0.3$&     0.932 ($\pm$0.008) & -230.7 ($\pm$21.7) & 0.883 ($\pm$0.012) & -287.3 ($\pm$19.0)
\\ \midrule
\multirow{5}{*}{Robust-label}   
                                & $\Gamma = 1$ & 0.954 ($\pm$0.009) &   -157.4 ($\pm$21.6) &   0.881 ($\pm$0.012) & -313.7 ($\pm$43.8)\\
                                & $\Gamma = 1.5$ &   0.954 ($\pm$0.009)  & -160.3 ($\pm$21.2) & \cellcolor{gray!25}0.882 ($\pm$0.012)  & -307.4 ($\pm$38.4) \\
                                & $\Gamma = 2$ &   0.953 ($\pm$0.009)   &-163.6 ($\pm$20.9)   & 0.881 ($\pm$0.011) & -303.8 ($\pm$34.1)  \\           
                                & $\Gamma = 2.5$ &   0.953 ($\pm$0.009)   & -167.0 ($\pm$20.6) &0.879 ($\pm$0.011)  &  -302.3 ($\pm$31.9)\\
                                & $\Gamma = 3$ &     0.952 ($\pm$0.009)  & -170.5 ($\pm$20.4)& 0.878 ($\pm$0.012)  & \cellcolor{gray!25} -301.7 ($\pm$30.5) \\ \bottomrule
\multicolumn{6}{l}{\begin{tabular}[c]{l} 
- All results are the average values of 30 replications. Values in the parentheses are standard deviations \\
- Best models are highlighted with gray cells
\end{tabular}}
\end{tabular}
}
\end{table}

We find that, with larger values of $\rho_n$ and $\Gamma$, the training accuracy keeps decreasing and training LL becomes smaller, showing worse goodness of fit in the training data set. This is as expected, because the objective functions are weighted more on the robustness term. However, in the testing set, the both robust feature and label models perform better than the typical binary logit model. The best robust feature BNL ($\rho_n = 0.1$) has a testing accuracy of 0.890 and the best robust label BNL ($\Gamma = 1.5$) has a testing accuracy of 0.882, while the binary logit model's testing accuracy is 0.879. The improvement of testing LL is even higher for robust models.

As shown in Section \ref{sec_stats}, the prediction errors of the robust model can be decomposed into 3 components: 1) bias, 2) variances, and 3) errors from data. The robust models have higher bias, (likely) lower variance, and lower errors from data. Therefore, its performance will be inherently determined by the value of $\rho_n$ and $\Gamma$. If $\rho_n$ and $\Gamma$ are too large, we will have too conservative uncertainty sets, and the bias term will be too large, resulting in higher prediction errors of the robust models. With proper values of $\rho_n$ and $\Gamma$, we could leverage the lower variances and lower errors from data to offset the higher bias and get better prediction performance.

The estimated $\bb$ values of selected features are compared in Table \ref{tab_beta_label}. Walk time and bus in-vehicle time (IVT) are alternative-specific variables. Walking is set as the base mode. Hence, the alternative specific constant (ASC bus), distance to subway station (Dis. to sub.), and bus station accessibility (Bus access.) are only put in the utility function of the bus. We find that the estimated $\bb$ of the robust-label DCM works like ``regularization'', which shrinks the estimated $\bb$ value towards 0 (compared to the binary logit model) as we expected in Proposition \ref{prop_inconsist}. This provides another explanation of the good performance for out-of-sample prediction for the robust BNL model: To achieve robustness under data uncertainties, the model tends to estimate ``smaller'' (absolute) values of $\bb$. The ``smaller'' $\bb$ has better generalizability towards predicting samples that have different patterns with the training data set (as discussed in Proposition \ref{prop_error_bound}). This mechanism is similar to how regularization works in machine learning studies. 

\begin{table}[H]
\centering
\caption{Selected estimated $\bb$ for robust BNL models}\label{tab_beta_label}
\resizebox{\linewidth}{!}
{
\begin{tabular}{@{}llccccc@{}}
\toprule
\textbf{Model}  & \textbf{Parameters}   & \textbf{Walk time} & \textbf{Bus IVT} & \textbf{Dist. to sub.} &  \textbf{Bus access.}  & \textbf{ASC bus}\\ \midrule
Binary Logit & -  & -3.21 ($\pm$0.56) & -5.29 ($\pm$1.11) & 6.17 ($\pm$1.61) & 0.95 ($\pm$0.91) & -6.97 ($\pm$2.36)  \\ \midrule
\multirow{5}{*}{Robust-feature}& $\rho_n = 0.001$ & -3.21 ($\pm$0.55) & -5.29 ($\pm$1.11) & 6.15 ($\pm$1.60) & 0.92 ($\pm$0.89)  & -6.72 ($\pm$2.25) \\
& $\rho_n = 0.01$&  -3.19 ($\pm$0.52) & -5.19 ($\pm$1.06) & 5.98 ($\pm$1.54) & 0.69 ($\pm$0.78)  & -5.35 ($\pm$1.58) \\
& $\rho_n = 0.1$& -3.10 ($\pm$0.30) & -3.79 ($\pm$0.65) & 3.90 ($\pm$0.84) & -0.48 ($\pm$0.25) & -2.54 ($\pm$0.57) \\
& $\rho_n = 0.2$&  -2.62 ($\pm$0.23) & -2.32 ($\pm$0.33) & 2.19 ($\pm$0.34) & -0.70 ($\pm$0.11) & -1.39 ($\pm$0.25) \\
& $\rho_n = 0.3$& -1.93 ($\pm$0.16) & -1.33 ($\pm$0.17) & 1.32 ($\pm$0.15) & -0.56 ($\pm$0.07) & -0.80 ($\pm$0.12) 
\\ \midrule
\multirow{5}{*}{Robust-label}   
                                & $\Gamma = 1$ &  -2.79 ($\pm$0.45) & -3.99 ($\pm$0.73) & 4.52 ($\pm$1.03) & 0.80 ($\pm$0.61) & -6.05 ($\pm$1.69) \\
                                & $\Gamma = 1.5$ & -2.63 ($\pm$0.41) & -3.68 ($\pm$0.66) & 4.11 ($\pm$0.93) & 0.78 ($\pm$0.58) & -5.88 ($\pm$1.64) \\ 
                                & $\Gamma = 2$ &       -2.50 ($\pm$0.38) & -3.44 ($\pm$0.60) & 3.77 ($\pm$0.86) & 0.77 ($\pm$0.58) & -5.72 ($\pm$1.61) \\  
                                & $\Gamma = 2.5$ & -2.39 ($\pm$0.36) & -3.23 ($\pm$0.56) & 3.49 ($\pm$0.80) & 0.77 ($\pm$0.57) & -5.69 ($\pm$1.51) \\ 
                                & $\Gamma = 3$ & -2.30 ($\pm$0.35) & -3.06 ($\pm$0.52) & 3.25 ($\pm$0.76) & 0.76 ($\pm$0.57) & -5.61 ($\pm$1.44)    \\ \bottomrule
\multicolumn{7}{l}{\begin{tabular}[c]{l} 
- All results are the average values of 30 replications. Values in the parentheses are standard deviations
\end{tabular}}
\end{tabular}
}
\end{table}

\subsubsection{Multinomial case study}
Similar to the binary case study, we set $p = 2$ (thus $q=2$) and make the robustness term for the robust MNL model (Eq. \ref{eq_robust_mnl_jesen}) be $\rho_{n} \norm{\bb_{j} - \bb_{I_n}}_2$. The results in terms of different values of $\rho_{n}$ and $\Gamma$ are shown in Table \ref{tab_res_mnl}. Note that the parameter sets are different from the binary case study as the data sets are different. 

\begin{table}[H]
\centering
\caption{Results for robust MNL models}\label{tab_res_mnl}
\resizebox{\linewidth}{!}
{
\begin{tabular}{@{}llcccc@{}}
\toprule
\textbf{Model}  & \textbf{Parameters}   & \textbf{Training accuracy} & \textbf{Training LL} & \textbf{Testing accuracy} & \textbf{Testing LL} \\ \midrule
MNL & -  & 0.591 ($\pm$0.014) & -871.5 ($\pm$15.3)  & 0.540 ($\pm$0.021) & -988.5 ($\pm$41.5)   \\ \midrule
\multirow{5}{*}{Robust-feature}
& $\rho_n = 0.001$& 0.590 ($\pm$0.014) & -871.6 ($\pm$15.3)  & 0.542 ($\pm$0.022) & -981.5 ($\pm$39.5)  \\
& $\rho_n = 0.01$&  0.588 ($\pm$0.014) & -874.7 ($\pm$15.3)  & 0.552 ($\pm$0.021) & -952.2 ($\pm$33.2)  \\
& $\rho_n = 0.1$& 0.585 ($\pm$0.013) & -923.7 ($\pm$15.2)  & \cellcolor{gray!25}0.565 ($\pm$0.019) & \cellcolor{gray!25}-942.0 ($\pm$16.8)  \\
& $\rho_n = 0.15$& 0.578 ($\pm$0.018) & -966.7 ($\pm$13.5)  & 0.558 ($\pm$0.018) & -975.8 ($\pm$15.0)  \\
&$\rho_n = 0.2$ & 0.503 ($\pm$0.037) & -1001.6 ($\pm$10.6) & 0.519 ($\pm$0.029) & -1007.8 ($\pm$14.0)
\\ \midrule
\multirow{5}{*}{Robust-label}   
                                & $\Gamma = 1$ & 0.589 ($\pm$0.013) & -873.9 ($\pm$15.1) & 0.544 ($\pm$0.022) & -972.6 ($\pm$36.7) \\
                                & $\Gamma = 10$ & 0.583 ($\pm$0.014) & -886.5 ($\pm$15.5) & 0.555 ($\pm$0.022) & \cellcolor{gray!25}-937.5 ($\pm$26.2) \\
                                & $\Gamma = 100$ &  0.579 ($\pm$0.013) & -941.5 ($\pm$13.1) & \cellcolor{gray!25}0.558 ($\pm$0.020) & -956.1 ($\pm$13.3) \\    
                                & $\Gamma = 200$ &  0.582 ($\pm$0.015) & -979.7 ($\pm$11.0) & 0.557 ($\pm$0.022) & -987.3 ($\pm$9.9)  \\
                                & $\Gamma = 300$ &  0.586 ($\pm$0.015) & -1007.2 ($\pm$9.5) & 0.556 ($\pm$0.021) & -1012.9 ($\pm$8.7) \\ \bottomrule
\multicolumn{6}{l}{\begin{tabular}[c]{l} 
- All results are the average values of 30 replications. Values in the parentheses are standard deviations \\
- Best models are highlighted with gray cells
\end{tabular}}
\end{tabular}
}
\end{table}

The results are similar to what we observe in the binary cases. In general, with reasonable settings of the robust budget parameter $\rho_n$ and $\Gamma$, we observe better testing accuracy and LL. The best robust feature model ($\rho_n = 0.1$) has a testing accuracy of 0.565 and the best robust label model ($\Gamma = 100$) has a testing accuracy of 0.558. While the conventional MNL model's testing accuracy is 0.540. It is worth noting that, compared to the binary case study, the best hyper-parameter for robust label models can be quite different. This implies that hyper-parameter tuning may be necessary for implementing the robust MNL model. 

The estimated $\bb$ values of alternative specific features (i.e., cost and travel time) are shown in Table \ref{tab_beta_mnl}. Similar to binary cases, we observe the shrinkage of parameter scales with the increasing robust budget parameters. This is consistent with our analysis in Remark \ref{remark_robust_label_gamma}. Note that for the robust-label optimization, when $\Gamma$ goes to $+\infty$, $\bb$ will not be 0 (see Remark \ref{remark_robust_label_gamma}). The final values will depend on the data set.

\begin{table}[H]
\centering
\caption{Selected estimated $\bb$ for robust MNL models}\label{tab_beta_mnl}
\resizebox{\linewidth}{!}
{
\begin{tabular}{@{}llcccccc@{}}
\toprule
\textbf{Model}  & \textbf{Parameters}   & \textbf{Train time} & \textbf{Train cost} & \textbf{Car time} &  \textbf{Car cost}  & \textbf{SM time} & \textbf{SM cost}\\ \midrule
MNL & -  & -1.34 ($\pm$0.15) & -0.13 ($\pm$0.04) & -1.27 ($\pm$0.33) & -0.10 ($\pm$0.03) & -1.08 ($\pm$0.25) & -0.50 ($\pm$0.28) \\ \midrule
\multirow{5}{*}{Robust-feature}& $\rho_n = 0.001$ & -1.34 ($\pm$0.15) & -0.12 ($\pm$0.03) & -1.26 ($\pm$0.33) & -0.09 ($\pm$0.03) & -1.07 ($\pm$0.25) & -0.49 ($\pm$0.28) \\
& $\rho_n = 0.01$& -1.26 ($\pm$0.14) & -0.06 ($\pm$0.02) & -1.19 ($\pm$0.30) & -0.04 ($\pm$0.01) & -1.00 ($\pm$0.23) & -0.42 ($\pm$0.25) \\
& $\rho_n = 0.1$& -0.69 ($\pm$0.08) & 0.04 ($\pm$0.01)  & -0.44 ($\pm$0.12) & 0.01 ($\pm$0.01)  & -0.33 ($\pm$0.09) & -0.09 ($\pm$0.07) \\
& $\rho_n = 0.15$& -0.45 ($\pm$0.06) & 0.06 ($\pm$0.01)  & -0.16 ($\pm$0.06) & 0.02 ($\pm$0.00)  & -0.11 ($\pm$0.04) & -0.01 ($\pm$0.02) \\
& $\rho_n = 0.2$& -0.29 ($\pm$0.04) & 0.07 ($\pm$0.01)  & -0.03 ($\pm$0.02) & 0.02 ($\pm$0.00)  & -0.02 ($\pm$0.01) & 0.00 ($\pm$0.00)  
\\ \midrule
\multirow{5}{*}{Robust-label}   
                                & $\Gamma = 1$ &  -1.20 ($\pm$0.17) & -0.13 ($\pm$0.04) & -1.07 ($\pm$0.34) & -0.10 ($\pm$0.03) & -0.74 ($\pm$0.21) & -0.74 ($\pm$0.26) \\
                                & $\Gamma = 10$ & -0.89 ($\pm$0.19) & -0.06 ($\pm$0.02) & -0.80 ($\pm$0.33) & -0.05 ($\pm$0.02) & -0.44 ($\pm$0.23) & -0.67 ($\pm$0.24) \\
                                & $\Gamma = 100$ &   -0.36 ($\pm$0.08) & -0.03 ($\pm$0.01) & -0.37 ($\pm$0.15) & -0.02 ($\pm$0.01) & -0.21 ($\pm$0.10) & -0.29 ($\pm$0.11) \\
                                & $\Gamma = 200$ &  -0.22 ($\pm$0.06) & -0.03 ($\pm$0.01) & -0.24 ($\pm$0.10) & -0.02 ($\pm$0.01) & -0.13 ($\pm$0.06) & -0.17 ($\pm$0.06) \\
                                & $\Gamma = 300$ & -0.16 ($\pm$0.06) & -0.03 ($\pm$0.01) & -0.18 ($\pm$0.08) & -0.01 ($\pm$0.01) & -0.10 ($\pm$0.04) & -0.10 ($\pm$0.04)    \\ \bottomrule
\multicolumn{7}{l}{\begin{tabular}[c]{l} 
- All results are the average values of 30 replications. Values in the parentheses are standard deviations
\end{tabular}}
\end{tabular}
}
\end{table}

\subsection{Impact of error generation}
The model is derived based on the assumption that all perturbations $\boldsymbol{\Delta x}_n$ are independent across individuals. It is worth exploring model performance if $\boldsymbol{\Delta x}_n$ are correlated given some system errors. In this section, we consider the following system errors to generate the testing data set. 
\begin{itemize}
 \item \textbf{Over-reporting of travel time}: For features of travel time, their corresponding $\boldsymbol{\Delta x}$ are drawn from a uniform distribution $\mathtt{U}[0, 0.3\Bar{\boldsymbol{x}}]$. Other perturbations follow the same settings as in Section \ref{sec_binary_exp_setup}. 
 \item \textbf{Under-reporting of travel time}. For features of travel time, their corresponding $\boldsymbol{\Delta x}$ are drawn from a uniform distribution $\mathtt{U}[-0.3\Bar{\boldsymbol{x}}, 0]$. Other perturbations follow the same settings as in Section \ref{sec_binary_exp_setup}.
 \item \textbf{Social desirability bias} (i.e., respondents might over-report environmentally friendly choices like walking or cycling). With 10\% probability, the actual selections of non-walk and non-bike modes are replaced by walk or bike modes. Other perturbations follow the same settings as in Section \ref{sec_binary_exp_setup}
\end{itemize}
Since only the Singapore HITS dataset is from a revealed preference survey with user-reported travel time, the HITS dataset is used for testing. The robust-feature models use $\ell_2$ norm with $\rho_n = 0.1$ and robust-label models use $\Gamma = 1.5$ as they are the best hyper-parameters in the previous experiment. The prediction performance is shown in Table \ref{tab_res_bnl_error}. We saw that though the robust models are derived from independent error assumptions, they also outperform the nominal model when there are systematic errors in the system (i.e., error terms are correlated).

\begin{table}[H]
\centering
\caption{Impact of different data error generations}\label{tab_res_bnl_error}
\resizebox{\linewidth}{!}
{
\begin{tabular}{@{}llcccc@{}}
\toprule
\textbf{Scenarios}  & \textbf{Models}   & \textbf{Training accuracy} & \textbf{Training LL} & \textbf{Testing accuracy} & \textbf{Testing LL} \\ \midrule
\multirow{3}{*}{Independent errors} & BNL  & 0.957 ($\pm$0.010) & -152.6 ($\pm$22.6) & 0.879 ($\pm$0.014) & -358.0 ($\pm$77.1) \\ 
 & Robust-feature  & 0.951 ($\pm$0.009) &-163.3 ($\pm$22.0) &0.890 ($\pm$0.012) & -289.3 ($\pm$33.0) \\ 
 & Robust-label  & 0.954 ($\pm$0.009) & -160.3 ($\pm$21.2) & 0.882 ($\pm$0.012) & -307.4 ($\pm$38.4) \\ 
\midrule
\multirow{3}{*}{Over-reporting}& BNL  & 0.957 ($\pm$0.010) & -152.6 ($\pm$22.6) & 0.906 ($\pm$0.013) & -279.8 ($\pm$58.6) \\ 
 & Robust-feature  & 0.951 ($\pm$0.009) &-163.3 ($\pm$22.0) &0.917 ($\pm$0.011) & -225.6 ($\pm$28.6) \\ 
 & Robust-label  & 0.954 ($\pm$0.009) & -160.3 ($\pm$21.2) & 0.909 ($\pm$0.013) & -247.4 ($\pm$37.8)
\\ \midrule
\multirow{3}{*}{Under-reporting}& BNL  & 0.957 ($\pm$0.010) & -152.6 ($\pm$22.6) & 0.881 ($\pm$0.018) & -326.2 ($\pm$70.3) \\ 
 & Robust-feature  & 0.951 ($\pm$0.009) &-163.3 ($\pm$22.0) &0.897 ($\pm$0.012) & -256.4 ($\pm$27.2) \\ 
 & Robust-label  & 0.954 ($\pm$0.009) & -160.3 ($\pm$21.2) & 0.886 ($\pm$0.016) & -284.9 ($\pm$42.2)  
\\ \midrule
\multirow{3}{*}{Social desirability}  &  BNL  & 0.957 ($\pm$0.010) & -152.6 ($\pm$22.6) & 0.860 ($\pm$0.015) & -514.0 ($\pm$111.2) \\ 
 & Robust-feature  & 0.951 ($\pm$0.009) &-163.3 ($\pm$22.0) &0.871 ($\pm$0.013) & -415.3 ($\pm$67.2) \\ 
 & Robust-label  & 0.954 ($\pm$0.009) & -160.3 ($\pm$21.2) & 0.863 ($\pm$0.013) & -425.5 ($\pm$63.8) \\    \bottomrule
\multicolumn{6}{l}{\begin{tabular}[c]{l} 
- All results are the average values of 30 replications. Values in the parentheses are standard deviations \\
- ``Independent errors'' represent the same setting as Section \ref{sec_binary_exp_setup}
\end{tabular}}
\end{tabular}
}
\end{table}

\subsection{Impact of norms in uncertainty sets}
The above case study is based on the $\ell_2$ norm for the robust-feature MNL model. In this section, we test how different norms will affect the prediction results. Only the MNL model with Swissmetro data is tested. We fix $\rho_n = 0.1$ for all testing cases as 0.1 is the best hyper-parameter for the $\ell_2$ norm. The implementation of $\ell_\infty$ is to define a new auxiliary decision variable $u_{n,i}$ to replace $\norm{\bb_j - \bb_{I_n}}_\infty$, and add $u_{n,i} \geq (\bb_i - \bb_{I_n})^{(k)}$ and $u_{n,i} \geq -(\bb_i - \bb_{I_n})^{(k)}$ $ \forall k\in\mathcal{K}$ to the constraints. The prediction performance is shown in Table \ref{tab_res_mnl_norm}. From the results, we see that the robust-feature models always outperform the nominal MNL model in out-of-sample prediction accuracy, no matter what norm we use. Since $\rho_n = 0.1$ is the best hyper-parameter for the $\ell_2$ norm. $\ell_2$ norm still gives the best testing accuracy. Note that for other norms, hyper-parameter tuning is needed to obtain better performance than current results.

\begin{table}[H]
\centering
\caption{Impact of different norms on model performance}\label{tab_res_mnl_norm}
\resizebox{\linewidth}{!}
{
\begin{tabular}{@{}llcccc@{}}
\toprule
\textbf{Model}  & \textbf{Norms}   & \textbf{Training accuracy} & \textbf{Training LL} & \textbf{Testing accuracy} & \textbf{Testing LL} \\ \midrule
MNL & -  & 0.591 ($\pm$0.014) & -871.5 ($\pm$15.3)  & 0.540 ($\pm$0.021) & -988.5 ($\pm$41.5)   \\ \midrule
\multirow{5}{*}{Robust-feature}
& $q = 2$& 0.585 ($\pm$0.013) & -923.7 ($\pm$15.2)  & \cellcolor{gray!25}0.565 ($\pm$0.019) & -942.0 ($\pm$16.8)  \\
& $q = 3$&  0.586 ($\pm$0.012) & -907.8 ($\pm$15.7)  & 0.563 ($\pm$0.020) & -935.8 ($\pm$19.0)  \\
& $q = 4$&0.586 ($\pm$0.012) &-902.4 ($\pm$15.8)  &0.560 ($\pm$0.019) & \cellcolor{gray!25}-935.5 ($\pm$20.3)  \\
& $q = 10$& 0.586 ($\pm$0.013) & -896.7 ($\pm$15.9)  & 0.557 ($\pm$0.016) & -936.9 ($\pm$21.9)  \\
&$q = 100$ & 0.588 ($\pm$0.014) & -894.2 ($\pm$14.7) & 0.556 ($\pm$0.017) & -936.9 ($\pm$22.9)\\

&$q = \infty$ & 0.587 ($\pm$0.014) & -894.7 ($\pm$16.0) &0.556 ($\pm$0.017) & -937.7 ($\pm$22.8)
\\
\bottomrule
\multicolumn{6}{l}{\begin{tabular}[c]{l} 
- All results are the average values of 30 replications. Values in the parentheses are standard deviations \\
- Best models are highlighted with gray cells
\end{tabular}}
\end{tabular}
}
\end{table}



\subsection{Implementation of robust DCMs in pricing problem}
DCMs are used to for many transportation decision-making problems. Pricing or fare policy design is one of the most important decision-making problems. In this section, we will test how the robust-feature MNL model performs in a typical pricing problem. Consider the Swissmetro dataset, assuming the government wishes to use the survey results to decide the best fare policy for the Swissmetro so as to maximize the revenue (i.e., price $\times$ demand). Given the same definition as Section \ref{sec_binary_exp_setup}, assuming we wish to maximize the revenue of the 1000 samples in $\mathcal{D}^{\text{Test}}$. Denote the corresponding sample index set as $\mathcal{N}_{\mathcal{D}^{\text{Test}}}$. But we only observe their perturbed attributes $\boldsymbol{\tilde{x}_n}(\alpha) = (\boldsymbol{\tilde{x}_n^{\text{NoSMCost}}}, \alpha\cdot x_n^{\text{SMCost}})$, where $\boldsymbol{\tilde{x}_n^{\text{NoSMCost}}}\in\mathbb{R}^{|\mathcal{K}|-1}$ is the perturbed attributes vector excluding the Swissmetro cost. $x_n^{\text{SMCost}}\in\mathbb{R}$ is the original cost of Swissmetro (not perturbed).
$\alpha$ is the scaler for the price of the ticket, which is our decision variable. Given an estimated parameter $\hat{\bb}$, we wish to solve the following problem to maximize the revenue:
\begin{align}\label{eq_fare_opt}
    \max_{\alpha \geq 0} \sum_{n\in\mathcal{N}_{\mathcal{D}^{\text{Test}}}}  \alpha\cdot x_n^{\text{SMCost}} \cdot\frac{\exp((\hat{\bb}_{\text{SM}})^T\boldsymbol{\tilde{x}_n}(\alpha))}{\sum_{j\in\mathcal{C}_n} \exp \left(\hat{\bb}_{j}^T\boldsymbol{\tilde{x}_n}(\alpha)\right)}
\end{align}
where $\hat{\bb}_{\text{SM}}$ is the estimated parameters for Swissmetro. The problem is a non-convex optimization. However, since there is one single decision variable $\alpha$, we can solve it through brute-force searching within $0\leq\alpha\leq10$. Assuming $\alpha^*$ is the optimal solution of Eq. \ref{eq_fare_opt}. The final actual revenue will be evaluated using the ``true'' behavior mechanism ${\bb}^{\text{Test}}$ (see details in Section \ref{sec_binary_exp_setup}) and the non-perpetuated attributes $\bx_n$.  
\begin{align}
    \text{Revenue} = \sum_{n\in\mathcal{N}_{\mathcal{D}^{\text{Test}}}} \alpha \cdot x_n^{\text{SMCost}} \cdot \frac{\exp(({\bb}^{\text{Test}}_{\text{SM}})^T\boldsymbol{{x}_n}(\alpha))}{\sum_{j\in\mathcal{C}_n} \exp \left(({\bb}^{\text{Test}}_{j})^T\boldsymbol{{x}_n}(\alpha)\right)}
\end{align}
The final results are summarized in Table \ref{tab_res_max_revenue}. ``Oracle'' scenario is obtained by feeding $\bb^{\text{Test}}$ and $\boldsymbol{{x}_n}(\alpha)$ to Eq. \ref{eq_fare_opt}, which represents the upper bound of the revenue. The results show that given better predictability, the robust models also outperform the MNL model in decision-making problems.

\begin{table}[H]
\centering
\caption{Impact of different norms on model performance}\label{tab_res_max_revenue}
\resizebox{\linewidth}{!}
{
\begin{tabular}{@{}llccc@{}}
\toprule
\textbf{Model}  & \textbf{Parameters}   & \textbf{Objective} & \textbf{Optimal solution} ($\alpha^*$) & \textbf{Revenue}  \\ \midrule
MNL & -  & 2135.70 ($\pm$247.54) & 2.51 ($\pm$3.90)  & 1931.37 ($\pm$273.12)   \\ 
{Robust-feature} & $\rho_n = 0.01$& 2105.58 ($\pm$230.14)&4.35 ($\pm$4.78)&	\cellcolor{gray!25}1985.99 ($\pm$276.94)
   \\
{Robust-label} & $\Gamma = 3$&  2076.27 ($\pm$166.01)& 3.44 ($\pm$4.46)& 1978.73 ($\pm$275.83)
  \\\midrule
{Oracle} & - &  2067.53 ($\pm$208.25)&3.40 ($\pm$4.49)&2067.53 ($\pm$208.25)
 \\
\bottomrule
\multicolumn{5}{l}{\begin{tabular}[c]{l} 
- All results are the average values of 30 replications. Values in the parentheses are standard deviations \\
\end{tabular}}
\end{tabular}
}
\end{table}

\section{Conclusion}\label{sec_conclusion}
In this paper, we propose a robust BNL and MNL model framework that accommodates testing data uncertainties. The goal is to enhance the model's prediction accuracy in new samples as a classfication problem. Our model is rooted in the theory of robust optimization. Specifically, we address feature uncertainties by assuming the $\ell_p$-norm of measurement errors are below a predetermined threshold. For label uncertainties, we assume there are at most $\Gamma$ mislabeled choices as the uncertainty set. Under these assumptions, we derive tractable robust counterparts for both robust-feature and robust-label DCMs. The proposed models are validated in both binary and multinomial choice data sets. The results demonstrate that the robust models outperform the conventional BNL and MNL models in terms of prediction accuracy and log-likelihood when there are measurement errors in testing data. Our findings suggest that the robustness component functions as ``regularization'', leading to improved generalizability of the models.

There are several future extensions for the current model. First, it is possible to combine the robust-label and the robust-feature models by assuming an uncertainty set with both feature and label measurement errors. Future research may work on deriving the closed-form formulations for the integrated model. Second, The performance of robust models may depend on the hyper-parameters. Future studies may develop methods to automatically tune the hyper-parameters. Third, the uncertainty set defined in this study is decomposable for each individual $n$ (i.e., $\mathcal{Z}(\boldsymbol{\rho}) = \prod_{n\in\mathcal{N}} \mathcal{Z}_n(\rho_n)$). One may also add constraints for the total errors $\norm{(\boldsymbol{\Delta x}_{n})_{n\in\mathcal{N}}}_p$. However, this will break the tractability for deriving the closed-form formulation (but still solvable). Future studies may explore how different definitions of the uncertainty set impact the prediction performance of robust DCMs.

\section*{Authors Contributions}
\textbf{Baichuan Mo}: Conceptualization, Methodology, Software, Formal analysis, Data Curation, Writing - Original Draft, Writing - Review \& Editing, Visualization. \textbf{Yunhan Zheng}: Conceptualization, Methodology, Software, Data Curation, Writing - Original Draft.  \textbf{Xiaotong Guo}: Conceptualization, Methodology, Writing - Review \& Editing. \textbf{Ruoyun Ma}: Software, Data Curation. \textbf{Jinhua Zhao}: Conceptualization, Supervision, Project administration, Funding acquisition.

\appendix
\appendixpage


\section{Inequality gap for the robust-feature MNL approximation}\label{append_ineq_gap}
Define $\Delta \boldsymbol{x}_n ^* := \argmax_{\substack{\boldsymbol{\Delta x}_n \in \mathcal{Z}_n }}\log \sum_{j\in\mathcal{C}_n}\exp \big((\boldsymbol{\beta}_j-\boldsymbol{\beta}_{I_n})^T (\boldsymbol{x}_{n}+\boldsymbol{\Delta x}_{n})\big)$. Then the gap of the Jensen's inequality in Eq. \ref{eq_jensen_ineq} is :
\begin{align}\label{eq_gap_jensen}
    \text{Gap}_n = \text{(RHS - LFS)} = \log \frac{\sum_{j\in\mathcal{C}_n}\exp  \big((\boldsymbol{\beta}_j-\boldsymbol{\beta}_{I_n})^T \boldsymbol{x}_{n} + \rho_n \cdot \norm{\bb_j - \bb_{I_n}}_q\big)}{\sum_{j\in\mathcal{C}_n}\exp \big((\boldsymbol{\beta}_j-\boldsymbol{\beta}_{I_n})^T (\boldsymbol{x}_{n}+\boldsymbol{\Delta x}^*_{n})\big)}
\end{align}
So the gap will be zero if $\rho_n \norm{\bb_j - \bb_{I_n}}_q = (\boldsymbol{\beta}_j-\boldsymbol{\beta}_{I_n})^T\boldsymbol{\Delta x}^*_{n} \quad \forall j\in\mathcal{C}_n$, which implies $\boldsymbol{\beta}_j = \boldsymbol{\beta}_i$ for all $i,j \in\mathcal{C}_n$. In the specification of DCM, one of the alternatives will have a fixed coefficient of feature $k\in\mathcal{K}$ to be zero (to avoid perfect co-linearity). Therefore, $\exists i \in \mathcal{C}$ such that ${\bb}_{i},(k) = 0, \; \forall k \in\mathcal{K}$. Then $\boldsymbol{\beta}_j = \boldsymbol{\beta}_i$ is equivalent to $\boldsymbol{\beta} = 0$. Since larger $\rho_n$ will shrink $\bb$ toward zero, we know that the gap tends to be small when $\rho_n$ is large.
Another condition where the gap is close to zero is that one $\bb_i - \bb_{I_n}$ is significantly larger than others. (i.e., $\exists i^*$ such that $(\bb_{i^*} - \bb_{I_n})^T(\boldsymbol{x}_{n}+\boldsymbol{\Delta x}_{n}) \gg (\bb_{j} - \bb_{I_n})^T(\boldsymbol{x}_{n}+\boldsymbol{\Delta x}_{n}), j\neq i^*$. In this case, we can ignore other terms in the summation and the RHS and LHS will be the same. 

Now let us find an upper bound for the gap. Notice that:
\begin{align}
    \rho_n \cdot \norm{\bb_j - \bb_{I_n}}_q \leq \rho_n\cdot \max_{i\in\mathcal{C}_n}  \norm{\bb_i - \bb_{I_n}}_q,\quad \forall i \in \mathcal{C}_n \label{eq_up_bd_1}\\
    \sum_{j\in\mathcal{C}_n}\exp \big((\boldsymbol{\beta}_j-\boldsymbol{\beta}_{I_n})^T (\boldsymbol{x}_{n}+\boldsymbol{\Delta x}^*_{n})\big) \geq \sum_{j\in\mathcal{C}_n}\exp \big((\boldsymbol{\beta}_j-\boldsymbol{\beta}_{I_n})^T \boldsymbol{x}_{n}\big)\label{eq_up_bd_2}
\end{align}
where the second inequality is obtained by setting $\boldsymbol{\Delta x}_{n} = 0$. Substituting Eqs. \ref{eq_up_bd_1} and \ref{eq_up_bd_2} to Eq. \ref{eq_gap_jensen}, we have
\begin{align}\label{eq_gap_up}
\text{Gap}_n \leq |\mathcal{C}_n| \cdot \rho_n \cdot \max_{i\in\mathcal{C}_n}  \norm{\bb_i - \bb_{I_n}}_q
\end{align}
Therefore, we have the property that when $\rho_n$ is small, the gap is also small. Combining the analysis above, the gap for the approximation will not become extreme no matter what the values of $\rho_n$ are. When $\rho_n$ is large, $\bb$ will be close to 0, which helps to lower the gap. When $\rho_n$ is small, Eq. \ref{eq_gap_up} implies the gap is small as well.

\section{Upper bound for the robust-feature MNL problem}\label{append_upper}
The upper bound of the robust MNL problem can be obtained by getting the lower bound of the LHS in Eq. \ref{eq_jensen_ineq}.
Notice that 
\begin{align}
\sum_{j\in\mathcal{C}_n}\exp \big((\boldsymbol{\beta}_j-\boldsymbol{\beta}_{I_n})^T (\boldsymbol{x}_{n}+\boldsymbol{\Delta x}_{n})\big) \geq \max_{i\in\mathcal{C}_n}\exp \big((\boldsymbol{\beta}_i-\boldsymbol{\beta}_{I_n})^T (\boldsymbol{x}_{n}+\boldsymbol{\Delta x}_{n})\big) 
\end{align}
The inequality is because the sum of all positive terms is at least as large as any single term (including the maximum one). Take the logarithm of both sides:  
\begin{align}
\log \sum_{j\in\mathcal{C}_n}\exp \big((\boldsymbol{\beta}_j-\boldsymbol{\beta}_{I_n})^T (\boldsymbol{x}_{n}+\boldsymbol{\Delta x}_{n})\big) \geq \max_{i\in\mathcal{C}_n} \big((\boldsymbol{\beta}_i-\boldsymbol{\beta}_{I_n})^T (\boldsymbol{x}_{n}+\boldsymbol{\Delta x}_{n})\big) 
\end{align}
Then we maximize over $\boldsymbol{\Delta x}_{n}$ on both sides:
\begin{align}
\max_{\substack{\boldsymbol{\Delta x}_n \in \mathcal{Z}_n }}\log \sum_{j\in\mathcal{C}_n}\exp \big((\boldsymbol{\beta}_j-\boldsymbol{\beta}_{I_n})^T (\boldsymbol{x}_{n}+\boldsymbol{\Delta x}_{n})\big) &\geq \max_{i\in\mathcal{C}_n} \big(\max_{\substack{\boldsymbol{\Delta x}_n \in \mathcal{Z}_n }}(\boldsymbol{\beta}_i-\boldsymbol{\beta}_{I_n})^T (\boldsymbol{x}_{n}+\boldsymbol{\Delta x}_{n})\big) \notag \\
& =\max_{i\in\mathcal{C}_n} \left((\boldsymbol{\beta}_{i} -\boldsymbol{\beta}_{I_n})^T \boldsymbol{x}_{n} + \rho_n \norm{\boldsymbol{\beta}_{i} -\boldsymbol{\beta}_{I_n}}_q\right)
\end{align}
Therefore, Eq. \ref{eq_inner_min_ind} can be approximated as:
\begin{align}
    \sum_{n\in\mathcal{N}} \left((\boldsymbol{\beta}_{i} -\boldsymbol{\beta}_{I_n})^T \boldsymbol{x}_{n} + \rho_n \norm{\boldsymbol{\beta}_{i} -\boldsymbol{\beta}_{I_n}}_q\right) \leq -t \quad \forall i\in\mathcal{C} 
\end{align}
Therefore, an upper bound of the robust-feature MNL problem is:
\begin{subequations}\label{eq_robust_mnl_upper}
 \begin{align}
 \max_{\boldsymbol{\beta} \in \mathcal{B},\; t} & \quad t \\
     \text{s.t.} &  \sum_{n\in\mathcal{N}} \left((\boldsymbol{\beta}_{i} -\boldsymbol{\beta}_{I_n})^T \boldsymbol{x}_{n} + \rho_n \norm{\boldsymbol{\beta}_{i} -\boldsymbol{\beta}_{I_n}}_q\right) \leq -t \quad \forall i\in\mathcal{C} 
\end{align}   
\end{subequations}

\section{Bias-variance trade off for errors without perturbations}\label{append_bia_var}

The expected errors without data perturbation can be decomposed as bias and variances:
\begin{align}
    \mathbb{E}\left[\norm{\boldsymbol{P}(\boldsymbol{x}) -\hat{\boldsymbol{P}}({\boldsymbol{x}}\mid\hat{\bb})}_1 \right] &= \mathbb{E}\left[\norm{\boldsymbol{P}(\boldsymbol{x}) - \mathbb{E}[\hat{\boldsymbol{P}}({\boldsymbol{x}}\mid\hat{\bb})] + \mathbb{E}[\hat{\boldsymbol{P}}({\boldsymbol{x}}\mid\hat{\bb})]-\hat{\boldsymbol{P}}({\boldsymbol{x}}\mid\hat{\bb})}_1 \right] 
    \\&\leq \underbrace{\norm{\boldsymbol{P}(\boldsymbol{x}) -\mathbb{E}[\hat{\boldsymbol{P}}({\boldsymbol{x}}\mid\hat{\bb})]}_1}_{\text{Bias term}}   + \underbrace{\mathbb{E}\left[\norm{\hat{\boldsymbol{P}}({\boldsymbol{x}}\mid\hat{\bb})-\mathbb{E}[\hat{\boldsymbol{P}}({\boldsymbol{x}}\mid\hat{\bb})]}_1 \right]}_{\text{Variance term}},
\end{align}
where the second inequality is derived from the triangle inequality. 
Note that typical bias-variance trade off are derived from the squared errors. For the $\ell_1$ norm, the ``variance'' term here is essentially a measure of dispersion or variability, often referred to as expected deviation or mean absolute deviation from the predictor. We keep the name of ``variance'' here to be consistent with the literature. 

For the proposed robust MNL estimator, as it is biased (Propositions \ref{prop_inconsist} and \ref{prop_inconsist_RL}). The bias term will be larger than the nominal MLE estimator. However, as we show it has higher trace of the Fisher information matrix (Proportions \ref{prop_info_matrix} and \ref{prop_info_matrix_rl}), the variance term will be smaller than the MLE estimator. Therefore, the final expected errors without data perturbation will depend on the selection of $\rho_n$. A larger value of $\rho_n$ will increase the bias and decrease the variances.

\bibliography{mybibfile}

\end{document}